\newcommand{\eq}{\begin{equation}}
\newcommand{\en}{\end{equation}}
\newcommand{\calP}{\mathcal P}
\newcommand{\Nat}{\Bbb N}
\newcommand{\Z}{{\mathbb Z}}
\newcommand{\q}{{\tt q}}
\def\endpf{\hfill $\Box$ \vskip0.5cm}
\def \proof{\noindent{\it Proof.\ }}
\newtheorem{theorem}{\large Theorem}
\newtheorem{proposition}[theorem] {\large Proposition}
\newtheorem{corollary}[theorem]{\large Corollary}
\newtheorem{lemma}[theorem]{\large Lemma}
\begin{document}
\title{Boundaries from inhomogeneous Bernoulli trials
}
\author{Alexander Gnedin}\thanks{Utrecht University; e-mail A.V.Gnedin@uu.nl}
\date{
\today
\\
}
\maketitle

\begin{abstract}\noindent  
The boundary problem is considered for 
inhomogeneous increasing random walks on the square lattice $\Z_+^2$ with weighted edges.
Explicit solutions are given for some instances related to the classical and generalized number triangles. 
\end{abstract}

\section{Introduction}

\noindent
The  homogeneous Bernoulli processes
all share a  property which may be called {\it lookback similarity\,}: 
if the number of heads $h$ in any first $n$ trials is given,
then independently of the future outcomes the random history of the process  
can be described in some unified way. 
Specifically, the conditional history of homogeneous Bernoulli trials
has the same distribution as a uniformly random permutation of $h$ heads and $t=n-h$ tails.
This property is equivalent to exchangeability, meaning the invariance 
of distribution under   finite
permutations of coordinates.  
 A central structural result
regarding this class of  processes is
de Finetti's theorem, which  asserts that the exchangeable trials can be characterized as mixtures of 
the homogeneous Bernoulli trials.


\par More general  coin-tossing processes, in which  probability of  a head in the next trial 
may depend on the history through the counts of heads and tails observed so far,
are divided into other classes of lookback similar processes, 
each class with its own random mechanism of arranging $h$ heads and $t$ tails in succession,
consistently for every value of $n=h+t$. 
Understanding the structure of such classes is important in a variety of contexts, 
including statistical mechanics, urn models, species sampling, random walks on graphs, dimension theory 
of algebras, ergodic theory and others.
The principal steps of the analysis involve identification of the  extremal boundary, which may be characterized as
the set of ergodic processes in a given class, 
as well as construction and decomposition of 
distinguished nonergodic processes, like e.g. P{\'o}lya's urn processes in the exchangeable case.

\par A technical issue which must be resolved to set up the scene 
is the way of specifying
conditional probabilities on histories. 
The backward transition probabilities are rarely available directly. 
Sometimes 
the starting point is a particular 
reference distribution $P^*$ which determines the class
\cite{Malyshev, Irina}, however,  
in many combinatorial and algebraic contexts 
there might be no natural candidate  
with a desirable property of non-degeneracy.
For instance, if the distributions on histories are determined by some symmetry condition,
like equidistribution of paths in a Bratteli diadram (see e.g \cite{KOO}), 
it  takes some effort to construct a nontrivial process.
The approach adopted in this paper 
amounts to defining conditional
distributions on histories  by means of a multiplicative weight function. 
The underlying structure is the lattice ${\mathbb Z}_+^2$ with weighted edges, which we
call a {\it weighted Pascal graph}. When all weights are $1$ the structure is the Pascal
triangle, in which the path-counts (combinatorial {\it dimensions}) are given by the binomial coefficients.
The coin-tossing process
is encoded into a increasing
lattice path
with unit horizontal and vertical  jumps, 
and the boundary problem is connected to  the asymptotic weighted path-enumeration.
For suitable choices of the weight function, the model covers arbitrary  
coin-tossing processes, for which the joint counts of heads and tails
is the sufficient statistic of the history to predict the outcomes of future trials.

\par  We will consider some classes of lookback-similar processes, for which 
the boundary can be determined without direct path-counting.
Our strategy boils down to three steps.
First of all, we exploit a monotonicity idea to show that  the ergodic processes
are naturally parameterized by the probability of a head in the first trial, 
so the boundary is always a subset of the unit interval.
Then we manipulate with admissible transformations of weights to 
construct a parametric family of processes.
The last step is to either verify  that the constructed processes are ergodic,
or to obtain all extremes  by decomposition in ergodic components. 

\par The generalized Pascal triangles
appear sometimes in connection with  statistics of combinatorial structures like partition and 
composition posets. Analysis of measures on the `triangles' has been proved useful to construct 
processes with values in these more complex objects \cite{perm, Gibbs, GO0, GO2}.
In section \ref{alphaneg} we include one example of this kind, which is a new sequential construction 
of exchangeable partition of a countable set.



\section{Weighted Pascal graphs}

\noindent
We shall be dealing with  Markov chains $S=(S_n, ~n\geq 0)$ on the square lattice
${\mathbb Z}^2_+$, with edges  directed away from the origin. 
A {\it standard path} starts at $(0,0)$ and each time increments by either $(1,0)$ or $(0,1)$.
We think of the increment
$S_{n}-S_{n-1}= (1,0)$ as a head,  and $S_{n}-S_{n-1}= (0,1)$
as a tail in the $n$th coin-tossing trial. 
The components will be denoted $S_n=(H_n,T_n)$, so $H_n+T_n=n$.

Let $w$ be a positive function on the set of edges.
We call the lattice ${\mathbb Z}_+^2$ with weighted edges 
a {\it weighted Pascal graph}.  
The weights of the
edges connecting $(h,t)\to(h+1,t)$ and $(h,t)\to(h,t+1)$
will be denoted $w_1(h,t)$ and $w_0(h,t)$, respectively.

Define the weight of a path 
connecting two given  grid points  to be the product of weights of edges along the path. 
The sum of weights  of all  standard paths with terminal point $(h,t)$ is  denoted $d(h,t)$ and called {\it dimension},
this quantity is analogous to the partition sum in statistical mechanics.
The dimensions satisfy the backward recursion
$$d(h,t)=w_1(h-1,t) d(h-1,t)+ w_0(h,t-1)d(h,t-1)$$
(where the terms vanish if $h=0$ or $t=0$).
More generally, we define the extended dimension
$d(h,t;h',t')$ as the sum of weights of paths from $(h,t)$ to $(h',t')$. 
A class $\calP=\calP(w)$ of lookback similar distributions for $S$ is defined by
the following 
\vskip0.3cm
\noindent
{\bf conditioning property:} if the Markov chain $S$ starting at $S_0=(0,0)$ visits 
 $(h,t)\in {\mathbb Z}_+^2$ with positive probability, then given
 $S_{h+t}=(h,t)$ the  conditional probability 
of each standard path with endpoint $(h,t)$ is equal to the weight of the path divided by $d(h,t)$.
\vskip0.3cm

The class $\mathcal P$ is  a convex, weakly compact set with the property of uniqueness of 
representation of each $P\in{\mathcal P}$ as a convex mixture of the extreme elements of $\mathcal P$.
The set of extreme points of $\mathcal P$ is the {\it extremal boundary}  denoted  ${\rm ext}{\mathcal P}$.
The extremes  are 
 characterized as ergodic measures $P\in {\mathcal P}$,
for which every tail event of the process $S$ has $P$-probability zero or one.
Note that the tail sigma-algebra for $S$ coincides with the exchangeable sigma-algebra for 
the sequence of increments of $S$.
The {\it boundary problem} asks one to describe as explicitly as possible the set ${\rm ext}\calP$.

Each $P\in \calP$
is uniquely determined by probabilities of finite standard paths.
These probabilities are conveniently 
encoded into
 a {\it probability function} on ${\mathbb Z}_+^2$ 
$$\phi(h,t):={P(S_{h+t}=(h,t))\over d(h,t)},$$
so that the probability of a standard path terminating at $(h,t)$ is equal to the weight of the path multiplied by 
$\phi(h,t)$. The transition probabilities from $(h,t)$ to $(h+1,t)$ and $(h,t+1)$ are written then as 
$$
p(h,t):={w_1(h,t)\phi(h+1,t)\over \phi(h,t)},~~~q(h,t):={w_0(h,t)\phi(h,t+1)\over \phi(h,t)},
$$
respectively.
The probability functions are characterized as nonnegative solutions
to a forward recursion
\begin{equation}\label{forward-rec}
\phi(h,t)=w_0(h,t) \phi(h,t+1)+w_1(h,t)\phi(h+1,t),
\end{equation} 
with the normalization $\phi(0,0)=1$. Note that 
\begin{equation}\label{dist-S}
\sum_{h=0}^n \phi(h,n-h)d(h,n-h)=1,
\end{equation}
since the terms make up the distribution of $S_n$.
We mention some intepretations and aspects of the boundary problem.

\vskip0.3cm
\noindent
{\bf 1.} For the Pascal triangle  $w\equiv 1$, and the recursion (\ref{forward-rec})
goes back to Hausdorff \cite{Haus}.
Positivity of $\phi$ in this case means that
 the sequence $\phi(\,\cdot,\,0)$ is completely monotone, hence by the celebrated Hausdorff's 
theorem  it is uniquely representable as a sequence of moments of a probability measure on $[0,1]$.

A similar connection exists in the general case too.
To that end,  
observe that the bivariate array $\phi$ is obtainable by weighted differencing of the sequence
$\phi(\,\cdot\,,0)$, for instance
$\phi(n,1)=(\phi(n,0)-w_1(n,0)\phi(n+1,0))/w_2(n,0)$.
Thus
solving (\ref{forward-rec}) means finding all sequences  $\phi(\,\cdot\,,0)$ for which the weighted 
differences are non-negative. We will show in the next section that ${\rm ext}\calP$ is homeomorphic
to a subset ${\mathcal E}\subset[0,1]$ via $\pi(P)=P(H_1=1)$, therefore 
the extremal decomposition 
$$\phi(n,0)=\int_{\mathcal E} \phi_\pi(n,0)\mu(d\pi),$$
can be seen as a generalized problem of moments on $\mathcal E$,
with the kernel $\phi_\pi(n,0)$, in place of $\pi^n$ from the classical problem of moments.

\vskip0.3cm
\noindent
{\bf 2.} Finite permutations of $\Nat$ act on infinite paths in $\Z_+^2$ by re-arranging the sequence of increments. 
In these terms, the distributions  $P\in\calP$ can be characterized as measures 
{\it quasi}-invariant  under permutations.
The characteristic cocycle of the action is uniquely determined by the 
condition that if a path  
fragment $(h,t)\to(h+1,t)\to(h+1,t+1)$ 
is switched by transposition to 
$(h,t)\to(h,t+1)\to(h+1,t+1)$, then 
 probability of the path is multiplied 
by $w_0(h,t)w_1(h,t+1)w_1(h,t)^{-1}w_0(h+1,t)^{-1}$.
\vskip0.3cm
\noindent
{\bf 3.} By a well known recipe, the extreme $\phi$ are representable  as limits of the `Martin kernel'
\begin{equation}\label{dim-rat}
\phi(h,t)=\lim {{\rm dim}(h,t;h',t')\over {\rm dim}(h',t')},
\end{equation}
where $|h'+t'|\to\infty$, and $(h',t')$ vary in some way to ensure convergence of the ratios for all $(h,t)$.
This relates the boundary problem to the asymptotic weighted path enumeration.
\vskip0.3cm
\noindent
{\bf 4.} Call $P^*\in\calP$ {\it fully supported}, 
if it gives positive probability to every finite path, or, equivalently, the corresponding probability function $\phi^*$
is strictly positive. In principle, all
other probability functions are obtainable then from $P^*$ by a change of measure 
$\phi=\psi\phi^*$ where $\psi$ is a $P^*$-harmonic function
satisfying the recursion
$$\psi(h,t)=p^*(h,t)\psi(h+1,t)+q^*(h,t)\psi(h,t+1),$$
and $p^*,q^*$ are the transition probabilities of $S$ under $P^*$.
This is an instance of Doob's $h$-transform,

\vskip0.3cm
\noindent
We say that $P\in\calP$ is finitely supported if it is not fully supported.
For finitely supported $P\in{\rm ext}\calP$ the probability function
is supported by one of the sets   
$$I_m:=\{(h,t): h\leq m\},~~~~~J_m:=\{(h,t): t\leq m\}$$ 
for some  $m\geq 0$.
The case $m=0$ corresponds to two trivial measures, denoted  $Q_{0,\infty}$ and $Q_{\infty,0}$, 
each supported by a single infinite path
$H_n\equiv 0$ and $T_n\equiv 0$, respectively.


\noindent
{\bf 5.}
If the  weights are integers, we  may consider the graph with 
 multiple edges 
and {\it fine paths}, which distinguish among the edges connecting the neighbouring grid points.
The setting is important e.g.
in ergodic theory to define 
the `adic transform' of the path space
\cite{adic1, adic2}.
When the distinction of edges with the same endpoints is made, 
we understand $\calP$ as the class of probability measures
on fine paths, with a more delicate conditioning property that for every $(h,t)$ 
all standard fine paths terminating at $(h,t)$ have the same
probability $\phi(h,t)$.

Pascal graphs with multiple 
edges may arise by  consistent coarsening  the set of nodes in a tree. 
Suppose ${\mathcal T}$ is an infinite rooted tree with  the set of vertices ${\mathcal T}_n$ ($n\geq 0$) 
at distance $n$ from
the root. Suppose  each ${\mathcal T}_n$ is partitioned in $n+1$ nonempty blocks labeled $(0,n), (1,n-1),\dots,(n,0)$,
so that for $h+t=n$ 
each vertex in block $(h,t)\subset {\mathcal T_n}$    is connected by the same number $w_1(h,t)$ of edges to every vertex in 
block $(h+1,t)$,  
and by  the same number $w_0(h,t)$ of edges to every vertex in block $(h,t+1)$.    
Merging all nodes in each block $(h,t)$ 
in a single node we obtain a weighted Pascal graph,
 whose standard  paths, in turn, ramify according to $\mathcal T$.

\vskip0.2cm
\noindent
{\bf Example.} The {\it Stirling-I triangle} has $w_0(h,t)=h+t+1, w_1(h,t)=1$.
The dimensions
${\rm dim}(h,t)$ are unsigned  Stirling number of the first kind. 
The fine paths of length $n$ are in bijection with 
permutations
of $n+1$ integers. 
Write permutation $\pi_n$ of $[n]:=\{1,\ldots, n\}$ in the one-row notation, like e.g.
${\bf6}~{\bf4}~5~{\bf1}~2~3$. The boldfaced elements  are (lower) records
(a record is  a number smaller than all numbers to the left of it, if any). 
A permutation $\pi_{n+1}$ of $[n+1]$ extending 
$\pi_n$ is  obtained   by inserting integer $n+1$ in one of $n+1$ possible positions, e.g.
${\it 7} {6} {4} 5 {1} 2 3,~~~~{6} {\it 7} {4} 5 {1} 2 3,~~~\cdots~~~,{6} {4} 5 {1} 2 3 {\it 7}.$
The extension organizes permutations of integers $1,2,\dots$ in a tree, 
in which each $\pi_n$ has $n+1$ followers.
Now suppose permutations of $[n]$ are classified by the number of  records.
Each $\pi_n$ with $h$ records ($1\leq h\leq n$) 
is followed by a sole permutation of $[n+1]$ with $h+1$ records, obtained by inserting
$n+1$ in the leftmost position of $\pi_n$, and $n$ permutations with $h$ records. 
Assigning label $(h,t)$ to the class of permutations of $[h+t+1]$ with $h+1$ records,
the classification of permutations by the number of records 
is then captured by 
the Stirling-I triangle.

The same multiplicities  appear when permutations are classified by the number of cycles. 
Arrange elements in each cycle in the clockwise cyclic order. 
Then extending $\pi_n$ amounts to either inserting $n+1$ in some cycle clockwise next to any of 
the integers $1,\ldots, n$,
or starting a new singleton cycle with $n+1$.

\section{Probability of the first head} 
\label{gener}

As we have seen, each  measure $P\in\calP$ is uniquely determined by 
$\phi(\,\cdot\,,0)$, that is by the  sequence of probabilities 
$P(H_n=n)={\rm dim}(n,0)\phi(n,0), n\geq 0$. 
We aim now to show that the probability of the first head 
$$\pi=\pi(P):=P(H_1=1)$$ 
parameterizes $P\in{\rm ext}\calP$.

Every $P\in\calP$ conditioned on $S_{h+t}=(h,t)$
coincides, as a measure 
on the set of  standard paths terminating at $(h,t)$, with the same  
 {\it elementary measure} $Q_{h,t}$ determined by the Martin kernel in (\ref{dim-rat}).
A path $(h'_n,t'_n), n\geq 0,$ is called {\it regular} if 
the elementary measures $Q_{h_n',t_n'}$ converge
weakly along the path, in which case the limit  necessarily belongs to $\calP$. 
We may also say that the limit measure is  induced by the path.
Denote ${\rm ext}^\circ\calP$ the set of measures induced by   regular paths.
By some general theory it is  known that
for every $P\in {\mathcal P}$,  the set of regular paths 
has $P$-probability 1, and that
$P\in {\rm ext}{\mathcal P}$ if and only if the set of regular paths 
that induce $P$ has $P$-probability 1. In particular, ${\rm ext}^\circ\calP\supset{\rm ext}\calP$.

\par For $P,P'\in \calP$ we say that $P$ is {\it stochastically larger} than $P'$ if
$
P(H_n\geq h)\geq P'(H_n\geq h)
$ 
for all $(h,n-h)\in{\mathbb Z}^2_+$. This defines a partial order on $\calP$, and we will  show that
it restricts as a total order on ${\rm ext}^\circ\calP$.

\begin{lemma}\label{L2} Let $(h_n,n-h_n)$ and $(h'_n,n-h'_n)$, $n\geq 0$, be two regular paths which induce $P$ and $P'$, respectively.
Then  
$P$ is stochastically larger than $P'$ iff
$h_n\geq h_n'$
for all large enough $n$. In this case the measures are distinct if and only if  $P(H_1=1)>P'(H_1=1)$, 
and then in fact
$P(H_n=n)>P'(H_n=n)$ for all $n\geq0$.
\end{lemma}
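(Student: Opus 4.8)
The plan is to reduce everything to a monotonicity property of the elementary (pinned) measures at a fixed level and then pass to the limit along the two regular paths. For $n\le m=h+t$ one reads off from the conditioning property that $Q_{h,t}(H_n=a)=d(a,n-a)\,d(a,n-a;h,t)/d(h,t)$, so the law of $H_n$ under $Q_{h,t}$ is governed by the extended dimensions. First I would prove the key claim: if $h+t=h'+t'=m$ and $h\ge h'$, then $Q_{h,t}$ is stochastically larger than $Q_{h',t'}$ for every marginal $H_n$, $n\le m$. By transitivity it suffices to treat adjacent endpoints $(h,t)$ and $(h-1,t+1)$, and for these I would establish the stronger likelihood-ratio order: the ratio $Q_{h,t}(H_n=a)/Q_{h-1,t+1}(H_n=a)$ is nondecreasing in $a$. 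The common prefactor $d(a,n-a)$ cancels, and after removing the constant $d(h-1,t+1)/d(h,t)$ this is exactly the antidiagonal inequality $d(a+1,b-1;h,t)\,d(a,b;h-1,t+1)\ge d(a,b;h,t)\,d(a+1,b-1;h-1,t+1)$ with $a+b=n$.

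This last inequality is a determinantal (Lindstr\"om--Gessel--Viennot) positivity statement. Taking sources $(a+1,b-1),(a,b)$ and sinks $(h,t),(h-1,t+1)$, the only non-crossing matching pairs the source of larger first coordinate with the sink of larger first coordinate, so the relevant $2\times2$ dimension determinant equals the total weight of pairs of vertex-disjoint standard paths and is therefore nonnegative; it is strictly positive whenever such a disjoint pair exists. This gives the monotonicity together with the strictness I will need later. With it the direction $\Leftarrow$ is immediate: if $h_n\ge h_n'$ for all large $n$, chaining the adjacent comparisons gives $Q_{h_n,n-h_n}(H_k\ge a)\ge Q_{h_n',n-h_n'}(H_k\ge a)$ for every $k$ and every large $n$; since $\{H_k\ge a\}$ is a cylinder event, weak convergence of the pinned measures forces convergence of its probability, and letting $n\to\infty$ yields $P(H_k\ge a)\ge P'(H_k\ge a)$, i.e. $P\succeq P'$.

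For $\Rightarrow$ I would argue by contraposition through the same limit: if $h_n<h_n'$ infinitely often, then along that subsequence the reversed comparison gives $P'(H_k\ge a)\ge P(H_k\ge a)$ for all $k,a$, so $P'\succeq P$; hence $P\succeq P'$ can then hold only together with $P'\succeq P$, which forces $P=P'$. Thus the characterization, and with it the total ordering of $\mathrm{ext}^\circ\calP$, is driven by the eventual sign of $h_n-h_n'$. For the final two assertions, note that stochastic dominance applied to the extreme event $\{H_n=n\}$ already gives the weak inequalities $P(H_n=n)=d(n,0)\phi(n,0)\ge d(n,0)\phi'(n,0)=P'(H_n=n)$ for every $n$ (with $\phi=\phi_P$, $\phi'=\phi_{P'}$), and in particular $\pi(P)\ge\pi(P')$; so the substance is the strictness and the rigidity.

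The main obstacle is exactly this upgrade from weak to strict, because the strict likelihood-ratio gaps at finite level collapse to equalities in the weak limit. To handle it I would work along the bottom row via the forward recursion $\phi(k,0)=w_0(k,0)\phi(k,1)+w_1(k,0)\phi(k+1,0)$. Since $P(H_n=n)=\big(\prod_{k<n}w_1(k,0)\big)\phi(n,0)$, all the claimed strict inequalities are equivalent to $\phi(n,0)>\phi'(n,0)$ for every $n\ge1$. Granting $\phi\ge\phi'$ on the bottom row with equality at some minimal index $N$, the recursion forces the strict reverse inequality $\phi(N-1,1)<\phi'(N-1,1)$ at the neighbouring interior node; the crux is to show this cannot coexist with $P,P'$ being induced by regular paths satisfying $h_n\ge h_n'$. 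I expect to close this by writing $\phi(n,0)=\lim d(n,0;h_m,n-h_m)/d(h_m,n-h_m)$ via the Martin kernel and using the strict determinantal gap together with the monotone structure to conclude that equality of the first-head probabilities propagates to equality of the entire sequence $\phi(\cdot,0)$, which determines the measure and hence gives $P=P'$ (the rigidity, so distinctness is equivalent to $\pi(P)>\pi(P')$). Conversely, $\pi(P)>\pi(P')$ then forces $\phi(n,0)>\phi'(n,0)$ for all $n$, that is $P(H_n=n)>P'(H_n=n)$.
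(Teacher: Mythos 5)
The first half of your argument --- the characterization of stochastic dominance by the eventual ordering $h_n\ge h_n'$ --- is correct, and it takes a genuinely different route from the paper. You derive the monotonicity of the pinned measures $Q_{h,t}$ along an antidiagonal from the Lindstr\"om--Gessel--Viennot positivity of the $2\times 2$ determinant of extended dimensions, which yields the (stronger) likelihood-ratio ordering of the marginals of $H_k$ and hence stochastic dominance, and you then pass to the limit over cylinder events. The paper instead runs the two pinned chains \emph{backward} in time from $(h_n,n-h_n)$ and $(h_n',n-h_n')$, letting them evolve independently until they first meet and coalescing them afterwards; since the head count decrements by $0$ or $1$ at each backward step, the chain started with more heads always keeps at least as many. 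Both arguments are sound: the coupling is softer and needs no positivity identity, while your determinantal inequality is more quantitative (it gives a likelihood-ratio, not merely a stochastic, comparison) and comes with a built-in criterion for strictness at finite levels. (Both you and the paper gloss over the same edge case in the ``only if'' direction, where $h_n-h_n'$ changes sign infinitely often and the two limits coincide.)

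The genuine gap is in the second half, the rigidity statement: that when $P\succeq P'$ the measures are distinct iff $P(H_1=1)>P'(H_1=1)$, in which case $P(H_n=n)>P'(H_n=n)$ for all $n\ge 1$. You correctly identify that the strict finite-level gaps collapse under weak limits, but your proposed repair is a plan rather than a proof (``I expect to close this by\dots''), and the one concrete step you assert has the wrong sign. Writing $\Delta=\phi-\phi'$ and letting $N$ be the least index with $\Delta(N,0)=0$, the forward recursion at $(N-1,0)$ gives $w_0(N-1,0)\Delta(N-1,1)=\Delta(N-1,0)-w_1(N-1,0)\Delta(N,0)=\Delta(N-1,0)\ge 0$, so $\phi(N-1,1)\ge\phi'(N-1,1)$, not the claimed strict reverse inequality; a reverse inequality appears only at $(N,1)$ via the recursion at $(N,0)$, and even then it does not by itself contradict $P\succeq P'$ together with $P\ne P'$. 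So the implication ``$P\ne P'$ and $P\succeq P'$ imply $\phi(n,0)>\phi'(n,0)$ for all $n\ge 1$'' is left unestablished. The paper is admittedly terse here too (``as is readily justified''), but its coupling at least exhibits the difference $Q_{h_n,n-h_n}(H_m=m)-Q_{h_n',n-h_n'}(H_m=m)$ as the probability of an explicit event --- the higher backward chain retains all heads down to level $m$ while the lower one does not --- which is the natural handle for proving strict positivity in the limit; your framework would need an analogous uniform lower bound on the determinantal gap along the inducing paths.
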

\proof Choose  $0\leq h'<h\leq n$ and consider the elementary measures $Q_{h_n,n-h_n}$ and $Q_{h'_n,n-h'_n}$ 
as the laws of two Markov chains $S$ and $S'$ (respectively) with $n$ moves. 
We define  a coupling of independent $S$ and  $S'$ by running the chains in the  backward time. Start $S,S'$ simultaneously 
at states
$(h_n,n-h_n)$ and $(h'_n,n-h_n')$, respectively, and let them running independently until the first time $\tau$  
when they meet at the same state. From time $\tau$ on let $S'$ coincide with $S$: this does not 
affect the marginal distributions of $S'$ since both chains have the same transition probabilities.
Because the number of heads  each time decrements by $0$ or $1$, by this coupling $S$ always has at least 
as many heads as $S'$. 
It follows that $P(H_m=m)\geq P'(H_m=m)$ for $m\leq n$.

\par Now suppose the elementary measures converge along paths $(h_n,n-h_n)$ and $(h_n',n-h_n')$ to distinct 
$P$ and $P'$, respectively. Two paths either have infinitely many intersections or the inequality between 
$h_n$ and $h_n'$ is definite for large enough $n$.
If eventually $h_n>h_n'$, then by the coupling argument
we conclude that $P$ is strictly stochastically
larger than  $P'$, in which case $P(H_n=n)>P'(H_n=n)$, as is readily justified.
\endpf

\begin{proposition}\label{hom} The set ${\rm ext}^\circ {\mathcal P}$ is weakly compact and  
$P\mapsto \pi(P)$
is a homeomorphism of ${\rm ext}^\circ{\mathcal P}$ onto  a subset of $[0,1]$. 
\end{proposition}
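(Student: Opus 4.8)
The plan is to use that $\mathcal P$ is weakly compact and metrizable (it is the space of probability measures on the compact path space $\{0,1\}^{\mathbb N}$ of increment sequences), so that the assertion reduces to three points: (i) $\pi$ is weakly continuous; (ii) $\pi$ is injective on ${\rm ext}^\circ\mathcal P$; (iii) ${\rm ext}^\circ\mathcal P$ is weakly closed, hence compact. Granting these, $\pi$ is a continuous bijection from a compact space onto its image $\mathcal E=\pi({\rm ext}^\circ\mathcal P)\subset[0,1]$, and a continuous bijection from a compact space to a Hausdorff space is automatically a homeomorphism onto its image.

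For (i), the event $\{H_1=1\}$ is a cylinder, hence clopen in the product topology, so $P\mapsto P(H_1=1)$ is continuous for weak convergence; thus $\pi$ is continuous on all of $\mathcal P$. For (ii), let $P\neq P'$ lie in ${\rm ext}^\circ\mathcal P$, induced by regular paths $(h_n,n-h_n)$ and $(h_n',n-h_n')$. I would invoke the dichotomy from the proof of Lemma \ref{L2}: if the two paths meet at a common grid point for infinitely many $n$, then at each such index the elementary measures $Q_{h_n,n-h_n}$ and $Q_{h_n',n-h_n'}$ literally coincide, so the two convergent sequences share a subsequence and $P=P'$, contrary to assumption. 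Hence the sign of $h_n-h_n'$ is eventually constant and nonzero; by Lemma \ref{L2} this makes one of $P,P'$ strictly stochastically larger, whence $\pi(P)\neq\pi(P')$. Equivalently, $\pi(P)=\pi(P')$ forces $P=P'$.

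The heart of the argument is (iii). I would use that $P$ is determined by the sequence $a_n(P):=P(H_n=n)={\rm dim}(n,0)\,\phi(n,0)$, and that each $a_n$ is weakly continuous (again a cylinder probability). Given $P_k\in{\rm ext}^\circ\mathcal P$ with $P_k\to P\in\mathcal P$, I want to exhibit a regular path inducing $P$. First pass to a subsequence along which the reals $\pi(P_k)$ are monotone; since $\pi$ is a strictly order-preserving bijection of the totally ordered set ${\rm ext}^\circ\mathcal P$ (Lemma \ref{L2}) onto $\mathcal E$, the $P_k$ are then stochastically monotone, say increasing, with inducing regular paths $\gamma^{(k)}$ whose head-counts satisfy $h_n^{(k)}\le h_n^{(k+1)}$ for all large $n$, again by Lemma \ref{L2}. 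I would then build an inducing path for $P$ by a diagonal selection of grid points from the $\gamma^{(k)}$, choosing the indices large enough that the chosen point on $\gamma^{(k)}$ is both coordinatewise above the previous choice (so the successive points lie on one increasing lattice path) and close enough to $P_k$, and hence to $P$, in the finitely many coordinates $a_1,\dots,a_k$. Verifying that the path $\gamma$ so assembled is regular with induced measure $P$ then amounts to the coordinatewise convergence $a_m(Q_{\gamma(n)})\to a_m(P)$.

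The main obstacle is exactly the passage from a diagonal \emph{sequence} of grid points escaping to infinity to a genuine unit-step \emph{path}: having selected widely spaced points that must be joined by monotone lattice segments, one must control the elementary measures at the intermediate grid points of those segments, which were not chosen to be close to $P$. Here I expect the decisive tool to be the within-level stochastic monotonicity extracted from the backward coupling in Lemma \ref{L2} — at a fixed level $n$ the measures $Q_{h,n-h}$ increase stochastically in the head-count $h$ — together with the eventual ordering $h_n^{(k)}\le h_n^{(k+1)}$. These should let one sandwich each intermediate $Q_{\gamma(n)}$ between elementary measures converging to $P$, forcing convergence along the whole path and placing $P$ in ${\rm ext}^\circ\mathcal P$. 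With (iii) in hand, the homeomorphism claim follows from the compactness argument of the first paragraph.
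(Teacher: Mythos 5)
Your proposal is correct and follows essentially the same route as the paper: continuity of $P\mapsto P(H_1=1)$ on cylinder events, injectivity and strict monotonicity via the total order of Lemma \ref{L2}, and closedness of ${\rm ext}^\circ\mathcal P$ by reducing to a stochastically monotone subsequence and splicing together a single regular path that meets the inducing path of $P_j$ at times $n_j\to\infty$. The only place you go beyond the paper is in spelling out why the spliced path induces $P$ at the intermediate levels (the paper dismisses this as ``readily seen''), and the level-wise stochastic sandwiching by the backward coupling that you identify is indeed the right way to close that step.
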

\proof
Assume $P_j\in {\rm ext}^\circ{\mathcal P}$ converge weakly to some $P\in {\mathcal P}$.
By Lemma \ref{L2} it is enough to consider the case when the sequence $P_j$ is monotonic in the stochastic order,
say increasing.  For each $P_j$ fix a path inducing it.
Then it is always possible to 
choose a path  which has the last intersection with
the path inducing $P_j$ at some time $n_j$, where $n_j\to\infty$.
It is readily seen that the path induces $P$, hence $P\in {\rm ext}^\circ\calP$.
Thus the sequential boundary is compact.
It remains to note that
$P\mapsto P(H_1=1)$ is a continuous strictly increasing function on ${\rm ext}^\circ\calP$.
\endpf

Henceforth the boundary can be identified with ${\mathcal E}:= \{\pi(P):~ P\in{\rm ext}\calP\}$. 
The set $\mathcal E$ may be fairly arbitrary, but 
in any case it contains the endpoints $0$ and $1$ that correspond to
the trivial measures. 

\par If there exists a finitely supported measure with probability function strictly positive on $I_m$
and zero otherwise, we denote this measure $Q_{m,\infty}$. 
If $Q_{m,\infty}$ exists, it is extreme and induced by the path $(m,n-m), n\to\infty$.
Likewise, a measure 
with probability function strictly positive on $J_m$
and zero otherwise is denoted $Q_{\infty,m}$, $m\geq 0$.
In some cases the boundary is comprised of only finitely supported measures:

\begin{lemma}\label{discr} 
Suppose the set of values of $m$ for which the finitely supported measure $Q_{m,\infty}$ exists
is an infinite increasing sequence. If along this sequence $\pi(Q_{m,\infty})\to 1$ then
these finitely supported measures and the trivial measure $Q_{\infty,0}$ exhaust the boundary.
\end{lemma}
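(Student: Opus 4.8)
The plan is to pin down every element of ${\rm ext}\calP$ by locating its value of $\pi$ among the points already known to lie in the boundary. Write $m_0<m_1<\cdots$ for the increasing sequence of indices for which $Q_{m,\infty}$ exists, and set $\pi_k:=\pi(Q_{m_k,\infty})$. Since the all-tails measure $Q_{0,\infty}$ always exists we have $m_0=0$ and $\pi_0=0$; since $Q_{m_k,\infty}$ is induced by the constant level $h_n\equiv m_k$, Lemma \ref{L2} shows $k\mapsto\pi_k$ is strictly increasing, and by hypothesis $\pi_k\uparrow 1$. The all-heads measure $Q_{\infty,0}$ has $\pi(Q_{\infty,0})=1$. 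By Proposition \ref{hom} the map $P\mapsto\pi(P)$ is injective on ${\rm ext}\calP\subset{\rm ext}^\circ\calP$ and is strictly increasing for the (total) stochastic order. Hence it suffices to prove that every $P\in{\rm ext}\calP$ satisfies $\pi(P)\in\{\pi_k:k\ge0\}\cup\{1\}$, since injectivity then identifies $P$ with the matching $Q_{m_k,\infty}$ or $Q_{\infty,0}$.

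So I fix $P\in{\rm ext}\calP$ and set $\pi^*:=\pi(P)$. If $\pi^*=1$, injectivity immediately gives $P=Q_{\infty,0}$. Otherwise $0=\pi_0\le\pi^*<1$, and because $\pi_k\uparrow 1$ there is a unique $k$ with $\pi_k\le\pi^*<\pi_{k+1}$. The decisive move is to convert the strict gap $\pi^*<\pi_{k+1}$ into a support statement. As $\pi$ is strictly increasing along the stochastic order, $P$ is stochastically smaller than $Q_{m_{k+1},\infty}$, i.e. $P(H_n\ge h)\le Q_{m_{k+1},\infty}(H_n\ge h)$ for all $(h,n)$. The probability function of $Q_{m_{k+1},\infty}$ is supported by $I_{m_{k+1}}$, so $H_n\le m_{k+1}$ almost surely under it; taking $h=m_{k+1}+1$ makes the right-hand side vanish, whence $P(H_n\ge m_{k+1}+1)=0$ for every $n$. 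Thus $P$ is supported by $I_{m_{k+1}}$ and is finitely supported.

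Now I invoke the classification of finitely supported extremes: such a $P$ has probability function supported by exactly one $I_m$ or one $J_m$. As $J_m$ is unbounded in the head coordinate while $P$ lives on $I_{m_{k+1}}$, the support must be some $I_{m'}$ with $m'\le m_{k+1}$, so $P=Q_{m',\infty}$ and in particular $m'$ belongs to our sequence. Finally $\pi_k\le\pi^*=\pi(Q_{m',\infty})$ forces $m'\ge m_k$ (monotonicity of $\pi(Q_{m,\infty})$ in $m$), so $m_k\le m'\le m_{k+1}$ with $m'$ a sequence value; consecutiveness leaves $m'\in\{m_k,m_{k+1}\}$, and $m'=m_{k+1}$ is impossible as it would give $\pi^*=\pi_{k+1}$. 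Hence $m'=m_k$, $P=Q_{m_k,\infty}$, $\pi^*=\pi_k$. This shows the listed measures exhaust ${\rm ext}\calP$; as a byproduct every $Q_{\infty,m}$ with $m\ge1$ fails to be extreme, since it is supported by $J_m$ and can be neither an $I$-type measure nor, by injectivity, equal to $Q_{\infty,0}$.

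The step I expect to be the main obstacle is the passage from the stochastic inequality to the sharp conclusion $P(H_n\ge m_{k+1}+1)=0$. It rests on two facts one must use together: that the very definition of $Q_{m_{k+1},\infty}$ forbids it from charging any state with more than $m_{k+1}$ heads, and that on the boundary the parameter $\pi$ orders measures exactly as the stochastic order does (Proposition \ref{hom}), so a strict gap in $\pi$ genuinely caps the head counts of $P$. Once this is in hand, the remainder is bookkeeping with the total order and the known classification of finitely supported extreme measures.
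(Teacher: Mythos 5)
The paper states Lemma \ref{discr} without giving a proof, so there is no argument of the author's to compare yours against; judged on its own terms, your argument is correct and is the natural one given the machinery of Section \ref{gener}. The order-theoretic squeeze is sound: totality of the stochastic order on ${\rm ext}^\circ\calP$ and strict monotonicity of $\pi$ (Lemma \ref{L2} together with Proposition \ref{hom}) do convert the strict inequality $\pi(P)<\pi(Q_{m_{k+1},\infty})$ into $P(H_n\geq h)\leq Q_{m_{k+1},\infty}(H_n\geq h)$ for all $(h,n)$, and the right-hand side vanishes for $h>m_{k+1}$, so the support of $P$ is contained in $I_{m_{k+1}}$. The remaining bookkeeping with consecutive sequence values is fine (and in fact more than is needed: it suffices that $P=Q_{m',\infty}$ for \emph{some} sequence value $m'$).

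The one step you should not leave as a bare citation is the passage from ``the support of $P$ is contained in $I_{m_{k+1}}$'' to ``$P=Q_{m',\infty}$''. The classification of finitely supported extremes in Section 2 is itself stated without proof, and the phrase ``supported by one of the sets $I_m$, $J_m$'' could be read as mere containment; that reading is not enough, since the definition of $Q_{m',\infty}$ requires $\phi$ to be strictly positive on \emph{all} of $I_{m'}$, and you must also identify $P$ with that particular measure. Both points can be closed with tools already available: since $w_0,w_1>0$, the recursion \rem{forward-rec} shows that $\phi(h,t)=0$ forces $\phi(h,t+1)=\phi(h+1,t)=0$, so the support of $\phi$ is closed under passing to predecessors; under $P$ the nondecreasing sequence $H_n$ is bounded by $m_{k+1}$, hence converges to a tail-measurable limit which is a.s.\ a constant $m'$ by ergodicity of $P$; the predecessor-closedness then gives support exactly $I_{m'}$, and a $P$-typical inducing path eventually agrees with $(m',n-m')$, so $P$ is induced by that path and therefore coincides with $Q_{m',\infty}$. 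With this inserted the proof is complete. A minor slip at the end: under the hypothesis the measures $Q_{\infty,m}$ with $m\geq 1$ cannot \emph{exist} (existence would make them extreme), rather than merely ``failing to be extreme''.
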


In some cases the boundary is homeomorphic to $[0,1]$:
\begin{lemma}\label{conti}{\rm \cite{Gibbs}} Supppose there is a sequence of positive constants 
$(c_n)$ with $c_n\to\infty$,
and for each $s\in[0,\infty]$  there is a $P_s\in\calP$ which satisfies $P_s(H_n/c_n\to s)=1$.
Suppose $s\mapsto P_s$ is a continuous injection from $[0,\infty]$ to $\calP$, with $0$ and $\infty$
corresponding to the trivial measures. Then a path $(h_n,n-h_n)$ is regular if and only $h_n/c_n\to s$ for 
some $s\in[0,\infty]$, in which case the path induces $P_s$. Moreover,
${\rm ext}^\circ\calP={\rm ext}\calP=\{P_s,s\in[0,\infty]\}.$
\end{lemma}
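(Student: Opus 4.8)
The plan is to settle the identity of the boundary \emph{as a set} first---showing $\{P_s\}={\rm ext}\calP={\rm ext}^\circ\calP$ and that $s\mapsto\pi(P_s)$ is a homeomorphism onto $[0,1]$---and only afterwards read off the description of regular paths. Throughout I lean on Proposition \ref{hom} (so that $\pi$ is a continuous injection on ${\rm ext}^\circ\calP$, strictly increasing for the stochastic order, and ${\rm ext}^\circ\calP$ is weakly compact) and on the coupling of Lemma \ref{L2}. Two preliminary remarks: since $P_0,P_\infty$ are the trivial measures, $\pi(P_0)=0$ and $\pi(P_\infty)=1$; and $s\mapsto\pi(P_s)$ is continuous, because $P\mapsto P(H_1=1)$ is weakly continuous on $\calP$.

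The first and main step is that each $P_s$ is extreme. I would write the ergodic decomposition $P_s=\int_{{\rm ext}\calP}R\,\mu_s(dR)$, available from the stated simplex property of $\calP$, and control it through the event $B_s=\{H_n/c_n\to s\}$. As $c_n\to\infty$ the event $B_s$ is a tail event, hence trivial under each ergodic $R$; since $P_s(B_s)=1$, integration forces $R(B_s)=1$ for $\mu_s$-a.e. component $R$. For such an $R$, being extreme, $R$-almost every path both induces $R$ and satisfies $H_n/c_n\to s$, so $R$ is induced by a regular path with head-counts obeying $h_n/c_n\to s$. Comparing via Lemma \ref{L2} a component of $P_s$ with a component of $P_{s'}$ for $s'\ne s$: their inducing paths separate (as $|s-s'|c_n\to\infty$), so the component with the larger constant is strictly stochastically larger, and therefore has strictly larger $\pi$-value. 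Feeding this monotonicity into the barycentre identity $\pi(P_s)=\int\pi(R)\,\mu_s(dR)$ and letting $s'\to s$ from below and from above, using continuity of $s'\mapsto\pi(P_{s'})$, I expect to squeeze $\pi(R)=\pi(P_s)$ for every component $R$. Injectivity of $\pi$ on ${\rm ext}\calP$ then collapses $\mu_s$ to a point mass, i.e. $P_s$ is extreme. Now $s\mapsto\pi(P_s)$ is a continuous injection of the compact connected $[0,\infty]$ onto a set containing $0$ and $1$, hence a homeomorphism onto $[0,1]$; comparing with the homeomorphism $\pi$ of Proposition \ref{hom} yields $\mathcal E=[0,1]$ and $\{P_s\}={\rm ext}^\circ\calP={\rm ext}\calP$.

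For the description of paths I would isolate one sandwich lemma: if $h_{n_k}/c_{n_k}\to\sigma$ along a subsequence, then $Q_{h_{n_k},\,n_k-h_{n_k}}$ converges weakly to $P_\sigma$. What makes this tractable is that a measure in $\calP$ is determined by its probability function $\phi$, and $\phi$ involves only the one-dimensional laws $P(S_m=(h,t))=P(H_m=h)$; so it is enough to prove convergence of the distribution functions $Q_{h_{n_k},t_{n_k}}(H_m\ge a)$ to $P_\sigma(H_m\ge a)$ for all $m,a$, after which the point masses $\phi_{h_{n_k},t_{n_k}}(h,t)$ converge to $\phi_\sigma(h,t)$ and weak convergence follows. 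To obtain this I pick, for $\sigma'<\sigma<\sigma''$, regular paths inducing $P_{\sigma'}$ and $P_{\sigma''}$ with head-counts asymptotic to $\sigma'c_n$ and $\sigma''c_n$ (available from extremality and the law of large numbers). For large $k$ the three terminal abscissae are ordered, so the coupling of Lemma \ref{L2}, run backward from the common time $n_k$, sandwiches $Q_{h_{n_k}}(H_m\ge a)$ between the two reference quantities; these converge to $P_{\sigma'}(H_m\ge a)$ and $P_{\sigma''}(H_m\ge a)$, and letting $\sigma'\uparrow\sigma$, $\sigma''\downarrow\sigma$ with continuity of $s\mapsto P_s$ pinches the limit to $P_\sigma(H_m\ge a)$. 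Granting this, if $h_n/c_n\to s$ then every subsequence gives $\sigma=s$, so the path is regular and induces $P_s$; conversely a regular path has some subsequential limit $\sigma$ of $h_n/c_n$, along which its limit must be $P_\sigma$, and injectivity of $s\mapsto P_s$ forces $h_n/c_n$ to converge. Together with the previous step this is precisely the assertion.

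The main obstacle is the tension between the available tools and the target: Lemma \ref{L2} and the coupling only compare the one-dimensional marginals $P(H_m\ge a)$, whereas weak convergence on the path space looks like a statement about joint laws. The resolution, which I would flag at the outset, is structural: by the conditioning property the law of a path given its endpoint is fixed, so $P\in\calP$ is recovered from $\phi$ and hence from its one-dimensional marginals alone; this is exactly what upgrades the stochastic-order sandwiches to genuine weak convergence. The other delicate point is the extremality squeeze, where I must ensure that pairwise strict monotonicity of $\pi$ on the (compact) supports of $\mu_s$ and $\mu_{s'}$ truly separates the barycentres in the limit $s'\to s$; weak compactness of ${\rm ext}^\circ\calP$ together with continuity of $\pi$ should render this routine, and the endpoint cases $\sigma\in\{0,\infty\}$ need only the one-sided form of the sandwich.
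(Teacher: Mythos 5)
The paper itself offers no proof of Lemma \ref{conti}: it is quoted from \cite{Gibbs}, so there is no in-paper argument to measure you against. Judged on its own terms, your proof is correct and uses exactly the ingredients the paper makes available. The first half is sound: $\{H_n/c_n\to s\}$ is indeed a tail event (finitely many increments shift $H_n$ by a bounded amount and $c_n\to\infty$), so each ergodic component $R$ of $P_s$ charges it fully, is induced by a regular path with $h_n/c_n\to s$, and the backward coupling of Lemma \ref{L2} orders the components of $\mu_{s'}$ and $\mu_s$ for $s'<s$ --- note that only the \emph{weak} inequality $\pi(R')\le\pi(R)$ is needed for your barycentre squeeze, so the question of strictness never arises; injectivity of $\pi$ on ${\rm ext}\calP\subset{\rm ext}^\circ\calP$ (Proposition \ref{hom}) then collapses $\mu_s$ to a point. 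The identification ${\mathcal E}=[0,1]$ and ${\rm ext}^\circ\calP={\rm ext}\calP=\{P_s\}$ follows by the intermediate value theorem as you say. The second half is also correct, and your structural remark is the right one: since every $P\in\calP$ is recovered from its one-dimensional marginals through $\phi$ and the conditioning property, the stochastic sandwich $Q_{h'_{n_k}}\preceq Q_{h_{n_k}}\preceq Q_{h''_{n_k}}$ on the events $\{H_m\ge a\}$ does upgrade to weak convergence of the elementary measures, and the one-sided versions handle $\sigma\in\{0,\infty\}$. This is essentially the argument of \cite{Gibbs}; your only cosmetic overstatement is that the separation of inducing paths requires merely $h_n\ge h'_n$ eventually, not $|s-s'|c_n\to\infty$.
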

In the situation of the last lemma the parameter $s$ has the meaning of the asymptotic 
frequency of heads on the $c_n$-scale. 
If $c_n=n$ can be chosen, like in the case of exchangeable trials,
then we should take for the range of $s$ a finite interval (and not $[0,\infty]$).

\section{Transformations of weights}

\noindent
We shall approach the question of costructing some lookback similar processes
through admissible transformations of $w$, which 
change dimensions but do not affect the conditioning property.

\begin{proposition} Two weight functions $w$ and $w'$  yield the same 
class of lookback similar distributions, i.e. 
 $\calP(w)=\calP(w')$, if and only if    
there exists 
a positive function $f$ on ${\mathbb Z}_+^2$ such that for every edge $s\to s'$
$$
w'(s,s')= w(s,s')f(s)/f(s').
$$
\end{proposition}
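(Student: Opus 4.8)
The plan is to prove both directions of the equivalence by working with the probability function $\phi$, which determines each $P\in\calP$, and the transition probabilities $p,q$ it induces. The key observation is that the conditioning property, and hence the class $\calP$, is governed not by the weights $w_0,w_1$ individually but by the ratios along paths—equivalently by the backward transition structure. So I would first reformulate membership $P\in\calP(w)$ in a form that isolates the ``gauge freedom'' in $w$.

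For the \emph{if} direction, suppose $w'(s,s')=w(s,s')f(s)/f(s')$ for a positive $f$ with, say, $f(0,0)=1$. Take any $P\in\calP(w)$ with probability function $\phi$ solving the forward recursion \re{forward-rec} for $w$. I would check that $\phi':=f\cdot\phi$ solves the forward recursion for $w'$: substituting $w'_i(h,t)=w_i(h,t)f(h,t)/f(\text{next node})$ into
$$
\phi'(h,t)=w_0'(h,t)\phi'(h,t+1)+w_1'(h,t)\phi'(h+1,t)
$$
the factors $f(h,t+1)$ and $f(h+1,t)$ cancel against those in $\phi'$, leaving $f(h,t)$ times the original recursion for $\phi$, which holds. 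Since $f>0$, positivity and the normalization $\phi'(0,0)=\phi(0,0)=1$ are preserved, so $\phi'$ is a bona fide probability function for $w'$. Crucially, the weight of a standard path telescopes: the product of $w'$-edge-weights equals the product of $w$-edge-weights times $f(0,0)/f(h,t)$, so the path probability $(\text{weight})\times\phi'$ is unchanged, giving $P\in\calP(w')$. The same computation run with $f^{-1}$ gives the reverse inclusion, so $\calP(w)=\calP(w')$.

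For the \emph{only if} direction, assume $\calP(w)=\calP(w')$. The idea is to manufacture $f$ from a single fully supported measure. By hypothesis there is a common $P$ lying in both classes, and I would want one that is fully supported so its $\phi,\phi'$ are strictly positive everywhere; such a measure can be obtained as a suitable mixture over the boundary (using \re{dist-S} and the representation machinery of Section~\ref{gener}) guaranteeing strict positivity on all of $\Z_+^2$. Writing $\phi,\phi'$ for the probability functions of this same $P$ relative to $w$ and $w'$ respectively, the identity of path probabilities forces, for every standard path to $(h,t)$,
$$
\Big(\prod_{\text{edges}} w\Big)\phi(h,t)=\Big(\prod_{\text{edges}} w'\Big)\phi'(h,t),
$$
so the ratio of $w'$- to $w$-weight of \emph{every} path to a given node depends only on that node. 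Setting $f(h,t):=\phi(h,t)/\phi'(h,t)>0$, this path-independence is exactly the statement that $w'(s,s')/w(s,s')=f(s)/f(s')$ edge by edge; one reads it off by comparing two paths differing in a single terminal edge.

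The main obstacle is the \emph{only if} direction, and specifically securing a \emph{fully supported} common measure so that dividing by $\phi'$ is legitimate everywhere. If the shared class happened to contain only finitely supported extremes, one would have to patch the argument by comparing $w$ and $w'$ edge-locally on the support and arguing that the multiplicative defect is itself a coboundary; this is the cocycle viewpoint flagged in item~{\bf 2} of Section~\ref{gener}, where path-independence of a multiplicative increment is equivalent to exactness, i.e. to the existence of the potential $f$. I would therefore present $f$ as the potential of the (necessarily closed, hence exact on the simply connected lattice $\Z_+^2$) multiplicative cocycle $w'/w$, which bypasses any reliance on a particular measure and handles the edge cases uniformly.
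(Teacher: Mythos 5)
Your ``if'' direction is correct and is exactly the paper's argument: the product of $w'$-weights along a standard path from $(0,0)$ to $(h,t)$ telescopes to the $w$-weight times $f(0,0)/f(h,t)$, so the conditional law of a path given its endpoint --- which is all the conditioning property refers to --- is unchanged. The detour through $\phi'=f\phi$ and the forward recursion is fine but unnecessary; the endpoint-dependence of the path weight already settles it.

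The ``only if'' direction has two soft spots. First, your primary route needs a \emph{fully supported} common measure, and its existence is not automatic: $\calP$ always contains the trivial measures, but it is not given that every node of $\Z_+^2$ is charged by some $P\in\calP$, and the parenthetical claim that such a measure ``can be obtained as a suitable mixture over the boundary'' is an assertion, not an argument (the paper itself treats full support as a hypothesis, not a theorem). Second, your fallback --- reading $f$ off as the potential of the multiplicative cocycle $w'/w$ --- is in substance the paper's ``induction on the length of the path'', but the word ``necessarily'' in ``necessarily closed'' is doing all the work. Closedness at the elementary square with corners $(h,t)$ and $(h+1,t+1)$, namely
$$\frac{w_1'(h,t)\,w_0'(h+1,t)}{w_1(h,t)\,w_0(h+1,t)}=\frac{w_0'(h,t)\,w_1'(h,t+1)}{w_0(h,t)\,w_1(h,t+1)},$$
must be \emph{derived} from $\calP(w)=\calP(w')$; it comes from comparing, under a common measure that visits $(h+1,t+1)$ with positive probability, the conditional probabilities of the two paths differing by the switch across that square. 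So the cocycle route does not ``bypass any reliance on a particular measure''; it only localizes that reliance to one square at a time, and one still needs to know each square is seen by some measure in the class. Once closedness is granted, your exactness argument on the simply connected lattice and the definition of $f$ by path products are correct and coincide with the paper's inductive construction.
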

\proof For the `if' part, the product of weights $w'(s,s')$ along a path depends only on the endpoints of the path,
hence the conditioning property is the same as for $w$. 
The `only if' part follows by induction on the length of the path. 
\endpf
A family of admissible transformations has the form 
\begin{equation}\label{weighttrans}
w'_0(h,t)=g_0(t) g(h+t)w_0(h,t), ~~~w'_1(h,t)=g_1(h)g(h+t)w_1(h,t),
\end{equation}
with arbitrary positive $g,g_0,g_1$.
We shall use three  instances of (\ref{weighttrans}): 
\begin{itemize}
\item[(i)] multiplying $w_0$ by a positive function of $t$,
\item[(ii)] multiplying $w_1$ by a positive function of $h$,
\item[(iii)] multiplying both $w_0$ and $w_1$ by the same positive function of $h+t$.
\end{itemize}

Call $w$   {\it balanced} if $w_0(h,t)+w_1(h,t)=\sigma(h+t)$ for some function $\sigma$.
For instance, the Pascal and Stirling-I triangles are balanced.
If $w$ is balanced then there is a fully supported $P\in\calP$  
with transition probabilities
\begin{eqnarray}\label{tr-pr}
p(h,t)=  {w_0(h,t)\over \sigma(h+t)},~~~
q(h,t)=  {w_1(h,t)\over \sigma(h+t)}.
\end{eqnarray}   
Conversely, for fully supported $P\in{\calP}$ the transition probabilities $p,\, q$ themselve
define an equivalent  weight function.
Thus 
\begin{itemize}
\item
finding a fully supported $P$ is equivalent to 
transforming  $w$ to a balanced
weight function $w'$.
\end{itemize}

A variation of this rule leads to finitely supported measures.
For instance, if in (\ref{weighttrans}) we take $g_0$ positive on $I_m$ and zero otherwise,
then the resulting $w'$ is not a strictly positive weight. However, if
the balance condition holds, then $w'$ defines a finitely supported measure $Q_{m,\infty}$ via (\ref{tr-pr}).

Let us review from the above positions the prototypical example of the Pascal triangle.
\vskip0.2cm
\noindent
{\bf The Pascal triangle.} 
The weight is $w\equiv 1$ and
the dimension function is  $d(h,t)={h+t\choose h}$.
The conditioning property identifies  $\calP$ as the family of processes of 
exchangeable  trials. 
We will not change $\calP$ if instead we choose $w_1>0$ to be an arbitrary function of 
$h$, and $w_0>0$ to be an arbitrary function of $t$, but imposing the balance condition to construct a measure
we are  restricted to linear functions, as is easily checked.

Transforming weights to 
$$w_0'(h,t)=1, w_1'(h,t)=\theta$$
with $\theta>0$ yields balanced weight functions, hence some measures $P_\theta\in\calP$.
The constructed process is
a sequence of
homogeneous Bernoulli trials with 
$$p(h,t)={\theta\over 1+\theta},~~~~q(h,t)= {1\over 1+\theta},$$
By the law of large numbers $P_\theta(H_n/n\to \pi)=1$ for $\pi=\theta/(\theta+1)$.
The family $P_\theta$ is continuous, and the trivial measures appear as $\theta\to 0$, respectively 
$\theta\to\infty$.
By Lemma \ref{conti} all extremes are found, and ${\mathcal E}=[0,1]$. This is de Finetti's theorem.

Note that the kernel in (\ref{dim-rat}) is
$${{\rm dim}(h,n-h;h',n'-h')\over{\rm dim}(h',n'-h')}= {n'-n\choose h'-h}{\bigg/}{n'\choose h'}.$$
From the criterion of path regularity 
we see that
the ratios of binomial coefficients
converge as $n'\to\infty$ (for all $n$ and $0\leq h\leq n$) if and only if $h'/n'\to\pi$ 
for some $\pi\in [0,1]$. The latter fact can be derived 
directly from the formula for binomial coefficients, and we include 
it here just to show how conclusions on aspects of the asymptotic behaviour of these and other
combinatorial numbers can be made.

For $a,b>0$, another transformation of weights
with $g_1(t)=a+h, g_0(t)=b+t$
results  in the balanced weight function
$$w_1''(h,t)=h+a,~~~ w_0''(h,t)=t+b.$$
This gives
a Markov chain with
$$\phi(h,t)={(a)_h (b)_t\over(a+b)_{h+t}}$$
(where $(x)_m$ is the rising factorial). This is known
as the P{\'o}lya urn process with starting configuration $(a,b)$. 
The representation as mixture of homogeneous Bernoulli processes follows 
from the fact that the limit law of 
$H_n/n$ is  the beta distribution with density
$${\Gamma(a)\Gamma(b)\over \Gamma(a+b)} \pi^{a-1}(1-\pi)^{b-1},~~~\pi\in [0,1].$$

\section{Generalized Stirling triangles}

\noindent
Following Kerov \cite{Kerov}, for two real sequences $(a_n), (b_h)$ 
consider a weighted Pascal graph with  
$$w_0(h,t)=a_{h+t}+b_h, ~~w_1(h,t)=1,$$
where
$a_{h+t}+b_h>0$ for $(h,t)\in\Z_+^2$.
The dimensions are 
 {\it the generalized Stirling numbers}.

Introduce polynomials in $\theta$
$$A_n(\theta)=(\theta+a_0)\dots(\theta+a_{n-1}), ~~~B_n(\theta)=(\theta-b_0)\dots(\theta-b_{n-1}).$$
Transforming $w$  as
$$w_1'(h,t)=\theta-b_h, ~~~w_0'(h,t)=a_{h+t}+b_h,$$
we obtain $w'$ which satisfies the balance condition with $\sigma(n)=\theta+a_{n}$, 
and we obtain a function
\begin{equation}\label{phi-theta}
\phi_\theta(h,t)= {B_h(\theta)\over A_{h+t}(\theta)}
\end{equation}
satisfying (\ref{forward-rec}).
From (\ref{dist-S}) follows that dimensions are 
 the transition coefficients between two polynomial bases,
$$A_n(\theta)=\sum_{h=0}^n d(h,n-h) B_h(\theta),$$
which justifies the name {\it generalized Stirling triangle}.
In the case of the  Stirling-I numbers, the transition is from powers to
factorial powers of $\theta$, and for  Stirling-II other way round.

Using (\ref{phi-theta}) 
yields transition probabilities for some $P_\theta\in\calP$,
$$p(h,t)={\theta-b_h\over \theta+a_{h+t}}, ~~~q(h,t)={a_{h+t}+b_h\over \theta+a_{h+t}},$$
provided that care of positivity is taken.
For any bounded range of $(h,t)$ we can indeed achieve that $w',p,q$ are all positive by choosing large enough 
$\theta$. If $\sup_h b_h<\infty$ then a fully supported measure $P_\theta$ is defined by choosing $\theta>\sup_h b_h$.

\begin{proposition}\label{bound-b} 
For every 
$m\geq 0$ satisfying
$b_m>b_h$ for $h<m$, 
there is a finitely supported $Q_{m,\infty}\in\calP$
with probability of the first head
$$\pi= {b_m-b_0\over b_m+a_0}.$$
If $\sup_h b_h=\infty$ then the boundary  $\mathcal E$ of the generalized Stirling triangle is 
this increasing sequence together with  its  accumulation point at $\pi=1$.
\end{proposition}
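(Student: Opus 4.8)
The plan is to realize each $Q_{m,\infty}$ as a boundary specialization of the family $P_\theta$ built from (\ref{phi-theta}), choosing $\theta$ so that the rightward head transition off the line $h=m$ is switched off. Reading the transition probabilities $p(h,t)=(\theta-b_h)/(\theta+a_{h+t})$ and $q(h,t)=(a_{h+t}+b_h)/(\theta+a_{h+t})$, I would set $\theta=b_m$. Then $p(m,t)=0$ for every $t$, so a chain started at the origin can never cross from $h=m$ to $h=m+1$ and is confined to $I_m$. Correspondingly the probability function $\phi_{b_m}(h,t)=B_h(b_m)/A_{h+t}(b_m)$ vanishes exactly when $h>m$, since the factor $(b_m-b_m)$ first appears in $B_{m+1}(b_m)$.

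The second step is to check that $\theta=b_m$ is admissible on $I_m$, which is precisely where the hypothesis $b_m>b_h$ for $h<m$ enters. For $h<m$ the numerator $b_m-b_h$ is strictly positive, and the denominators are controlled as follows: for $n\geq m$ the standing assumption gives $a_n+b_m>0$ directly, while for $n<m$ the record inequality $b_m>b_n$ combined with $a_n+b_n>0$ yields $a_n+b_m>a_n+b_n>0$. Hence every $A_{h+t}(b_m)$ is positive, so $\phi_{b_m}$ is strictly positive on $I_m$ and zero off it, i.e.\ it is a genuine $Q_{m,\infty}$. Its first-head probability is then read off as $p(0,0)=(b_m-b_0)/(b_m+a_0)$, the asserted value of $\pi$.

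For the boundary description I would invoke Lemma \ref{discr}, so I must identify the set of $m$ for which $Q_{m,\infty}$ exists as an infinite increasing sequence. The admissible $m$ are exactly the strict upper records of $(b_h)$, and when $\sup_h b_h=\infty$ the running maximum never stabilizes, so the records form an infinite increasing sequence $0=m_0<m_1<\cdots$ with $b_{m_k}\uparrow\infty$; along it $\pi(Q_{m,\infty})=(b_m-b_0)/(b_m+a_0)\to 1$. To know the record set is all of the existence set I would argue uniqueness: a finitely supported extreme living on $I_m$ is forced on its top row $h=m$ to solve $\phi(m,t)=(a_{m+t}+b_m)\phi(m,t+1)$, which pins that row down to $\phi_{b_m}(m,\cdot)$ up to scale, and the backward recursion in $h$ then determines the whole array, so the only candidate is $\phi_{b_m}$, positive precisely for records. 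Lemma \ref{discr} then closes the argument: these $Q_{m,\infty}$ together with the trivial measure $Q_{\infty,0}$, whose first-head probability is $1$, exhaust the boundary, giving $\mathcal E=\{\pi(Q_{m,\infty})\}\cup\{1\}$.

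The routine part is the positivity bookkeeping; the step I expect to be delicate is the last one, verifying that no finitely supported measure sits over $I_m$ for a non-record $m$, so that the existence set is genuinely the record sequence and Lemma \ref{discr} applies verbatim. The uniqueness-of-top-row observation is the crux there, and without it one only obtains containment of the record family in the boundary rather than the exact description.
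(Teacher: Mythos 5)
Your construction of $Q_{m,\infty}$ for record $m$ is correct and is essentially the paper's own: take $\theta=b_m$ in the family $P_\theta$, verify $A_n(b_m)>0$ for all $n$ and $B_h(b_m)>0$ exactly for $h\le m$ using the record hypothesis, and read off $\pi=p(0,0)=(b_m-b_0)/(b_m+a_0)$. The gap is in the non-existence argument for non-record $m$, which you yourself flag as the crux. The claim that ``the backward recursion in $h$ then determines the whole array'' from the top row is false: given row $h+1$, the relation $\phi(h,t)=(a_{h+t}+b_h)\phi(h,t+1)+\phi(h+1,t)$ determines row $h$ only up to the free initial value $\phi(h,0)$, so a solution supported on $I_m$ carries one undetermined constant per row. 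Concretely, if $j<m$ are both records, then $\lambda Q_{j,\infty}+(1-\lambda)Q_{m,\infty}$ has probability function strictly positive on $I_m$ and zero off it, with top row proportional to $\phi_{b_m}(m,\cdot)$, yet it is not $\phi_{b_m}$. Hence ``the only candidate is $\phi_{b_m}$'' does not follow, and you have not excluded an extreme measure supported on $I_m$ for non-record $m$.

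The paper closes this hole with a Martin-kernel path-weight comparison instead of a uniqueness claim. If $Q_{m,\infty}$ existed, it would be extreme, would satisfy $H_n\to m$ a.s., and would therefore be induced by the path $(m,n-m)$. Among standard paths ending at $(m,n-m)$, those reaching level $h=m$ by a bounded time have total weight of order $\prod_j(a_j+b_m)$, while the roughly $n-m$ paths that travel along level $h=m-1$ and jump up late contribute at least $(n-m)\prod_j(a_j+b_{m-1})$. When $b_m\le b_{m-1}$ the second quantity is of strictly higher order, so under the limit of the elementary measures $Q_{m,n-m}$ the conditional probability of having reached level $m$ by any fixed time tends to zero; the induced measure therefore cannot be $Q_{m,\infty}$, a contradiction. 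You need this (or an equivalent) argument to identify the existence set with the records; only then do Lemma \ref{discr} and the monotonicity of $\pi(Q_{m,\infty})$ give the stated description of $\mathcal E$. The rest of your proposal --- the positivity bookkeeping, $\pi\to 1$ along records, and the appeal to Lemma \ref{discr} --- is sound.
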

\begin{proof} Suppose $b_m\leq b_{m-1}$. The weight of the path connecting $(m,m)$ with $(m,n-m)$ is $\prod_{j=m}^{n-1}(a_j+b_m)$,
while the total weight of all standard paths with endpoint $(m,n-m)$ is estimated from below as 
$(n-m)\prod_{j=m}^{n-1}(a_j+b_{m-1})$. Since the latter is of the higher order, the path $(n-m,m)$ cannot
induce the measure $Q_{m,\infty}$, hence this measure does not exist. On the other hand,
if $b_m$ is strictly larger than $b_0,\dots,b_{m-1}$ then by a similar argument $Q_{m,\infty}$ is the measure
$P_\theta$ with $\theta=b_m$. Application of Lemma \ref{discr} ends the proof.
\end{proof}

\vskip0.2cm
\noindent
{\bf Example.} The classical Stirling-II triangle has $a_n\equiv 0$ and $b_{h}=h+1$.
For $m\geq 0$ the extreme measure $Q_{m,\infty}$ defines the familiar coupon-collectors'
process with $m+1$ distinct coupons, in which a head appears each time a new coupon is sampled.

\vskip0.2cm
If $\sup_h b_h<\infty$ the situation is more complex. We shall consider special cases of
{\it the generalized Stirling-I triangles} with $b_h\equiv 0$, {\it the generalized Stirling-II triangles} with
$a_n\equiv 0$, and two more specific  parametric families.

\subsection{The linear weights}
Most results of this section are taken from \cite{Gibbs}.
Suppose $a_n=n+1, ~b_h=- \alpha(h+1)$ with parameter $\alpha<1$.
Each $P$ corresponds to a exchangeable Gibbs partition of ${\mathbb N}$ of type $\alpha$ \cite{Gibbs}.
For suitable $\theta$
a probability function is defined by
\begin{equation}\label{CRP}
\phi_\theta(h,t)={(\theta+\alpha)(\theta+2\alpha)\dots (\theta+h\alpha)\over(\theta+1)_{h+t}},
\end{equation}
This Markov chain has transition probabilities
$$
p(h,t)={\theta+\alpha (h+1)\over h+t+1+\theta},~~~~q(h,t)={h+t+1-\alpha (h+1)\over h+t+1+\theta}.
$$
To ensure positivity we must require that $\theta\geq 0$ for $0\leq\alpha<1$,
and $-\theta/\alpha\in{\mathbb Z}_+$ for $\alpha<0$.

The number of heads $H_{n+1}$ coincides with  the number of blocks 
in a partition-valued process known as the
`Chinese restaurant process' (CRP)
\cite{CSP}. In more detail, the CRP construction is the following rule. Start at time 
$0$ with the unique partition of the set $\{1\}$.
Suppose at step $n-1$ there is a partition of $\{1,\dots,n\}$ with blocks
of sizes $n_1,\dots,n_{h+1}$, then element $n+1$ joins block $j$ with probability $(n_j-\alpha)/(n+\theta)$,
and starts a new block with probability $(\theta+(h+1)\alpha)/(n+\theta)$.

\subsubsection{The case $\alpha<0$.} \label{alphaneg}
This case is covered by Proposition \ref{bound-b}.
The possible values are $\theta=-\alpha m$, and the boundary is comprised of 
$Q_{m,\infty}, m\geq 0,$ and $Q_{\infty,0}$.

For $\alpha=-1$ 
there is a nonlinear transformation of weights
by multiplications with $g_0(t)=t+\gamma,~g_1(h)=(h+1)(h+1-\gamma)$, where $0<\gamma<1$, leading to the balanced 
weight function
$$w_1''(h,t)=(h+t+1)(h+t+1+\gamma),~~w_0''(h,t)=(h+1)(h+1-\gamma).$$
This yields a nonergodic process with transition probabilities
\begin{equation}\label{trpr1}
p(h,t)={(h+1)(h+1-\gamma)\over(h+t+1)(h+t+1+\gamma)},~~q(h,t)={(2h+t+2)(t+\gamma)\over (h+t+1)(h+t+1+\gamma)}.
\end{equation}
In the same way as in \cite{Gibbs}, there is an exchangeable 
partition-valued process, for which $p(h,t)$ is the probability of a  new 
 block at time $h+t+1$. 
The measure $Q_{m,\infty}$ enters the decomposition over the boundary with a weight equal to the 
probability of $H_n\to m$; this is also the weight by representing  the corresponding frequencies of exchangeable 
partition as a mixture of symmetric Dirichlet distributions.

The constructed exchangeable partition is
a new version of the CRP, with the rule: if at time $n-1$ the partition 
of $\{1,\dots,n\}$ has blocks of sizes $n_1,\dots,n_{h+1}$, then element 
$n+1$ starts a new block with probability $p(h,n-h)={(h+1)(h+1-\gamma)\over n(n+\gamma)} $  (as in (\ref{trpr1})),
and  joins block $j$ with probability  ${(n_j+1)(n-h-1+\gamma)\over n(n+\gamma)},~ j=1,\dots,h+1$.

\subsubsection{The case $\alpha=0$.} This is the Stirling-I triangle,
  closely related to random permutations and other logarithmic 
combinatorial structures \cite{ABT}.

 For $0<\theta<\infty$,  $S$ is the process of Bernoulli trials with probability of a head $\theta/(\theta+n+1)$ at 
trial $n\geq 0$. By the strong law of large numbers 
the measures (\ref{CRP}) satisfy 
$$P_\theta(H_n/\log n\to\theta)=1,$$
thus by Lemma  \ref{discr} the measures are extreme and exhaust the boundary, 
which may be parameterized by $\pi={\theta\over\theta+1}\in [0,1]$.

\subsubsection{The case $0<\alpha<1$.}\label{alphaL1} This case is related to the excursion theory of 
recurrent continuous-time 
Markov processes, like Brownian motion in the case $\alpha=1/2$ \cite{CSP}.

The $P_\theta$'s are not extreme, since
$H_n/n^\alpha$ has a nontrivial limit distribution. 
The boundary is continuous, ${\mathcal E}=[0,1]$, as in Lemma \ref{conti},
and the nontrivial extreme measures are obtained
by conditioning any $P_\theta$ with $-\alpha<\theta<\infty$ on $H_n/n^\alpha\to s$ for $0<s<\infty$. 
See \cite{Gibbs} for details.

\subsection{Generalized Stirling-I} 
Suppose $b_h\equiv 0$ and $a_n>0$, so $w_0(h,t)=a_{h+t}, w_1(h,t)$.
Under $P_\theta$ the process $S$
 is a process of inhomogeneous Bernoulli trials, sometimes called space-time random walk.
The probability function of $P_\theta$ is
$$\phi_\theta(h,t)={\theta^h\over(\theta+a_0)\dots(\theta+a_{h+t-1})}.$$

The structure of the boundary was sketched in \cite{Pitman-DeFin} (abstract of a conference talk). 
Later on Kerov \cite{Kerov} stated a conjecture which disagreed with \cite{Pitman-DeFin}.
Here we add 
some details to \cite{Pitman-DeFin}, in particular we confirm that the  
criterion for ${\mathcal E}=[0,1]$ is any of the equivalent conditions 
\begin{equation}\label{divser}
\sum_n {a_n\over (1+a_n)^2}=\infty\quad\Longleftrightarrow \quad\sum_n{\min (a_n,1)\over 1+a_n}=\infty.
\end{equation}
For instance, if $a_n=n^\beta$, then the boundary is $[0,1]$ if  $|\beta|\leq 1$, and it is discrete otherwise.

Any system of weights of the form $w_0'(h,t)=v_0(h+t), w_1'(h,t)=v_1(h+t)$ with $v_0(n)/v_1(n)=a_n$ yields the same
class $\calP$, and for these the criterion (\ref{divser}) assumes the form stressing symmetry between heads and tails:
$$\sum_n {v_0(n)v_1(n)\over (v_0(n)+v_1(n))^2}=\infty.$$

It is convenient to re-denote the transition probabilities under $P_\theta$ as   
$$p(n)={\theta\over \theta+a_{n}}, ~~~q(n)={a_{n}\over \theta+a_{n}},$$
where  $n=h+t$ and $\theta\in[0,\infty]$.

If (\ref{divser}) does not hold then $P_1$ (and any other nontrivial $P_\theta$) is not extreme, because 
the variance of $H_n$ remains bounded as $n\to\infty$, 
and the centered $H_n$'s
converge weakly to a nontrivial distribution. 
Thus $P_\theta\in{\rm ext}\calP$ can only hold if the series diverges.
In the latter case Lemma \ref{conti} has a limited applicability, because in general there might be no
common scaling $c_n\to\infty$ suitable for the full range of $\theta$.

\subsubsection{The case of continuous boundary}
\begin{proposition} If {\rm (\ref{divser})} holds
then 
${\rm ext}\calP=\{P_\theta,\theta\in[0,\infty]\}$,
with ${\mathcal E}=[0,1]$ being the range of 
$\pi={\theta\over a_0+\theta}$.
\end{proposition}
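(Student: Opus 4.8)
\noindent
The plan is to reduce everything to a single point: that each $P_\theta$ with $\theta\in(0,\infty)$ is extreme. Granting this, the conclusion follows from Proposition \ref{hom} with no further work. Indeed $\pi(P_\theta)=P_\theta(H_1=1)=p(0)=\theta/(a_0+\theta)$ is a continuous, strictly increasing bijection of $[0,\infty]$ onto $[0,1]$, the endpoints $\theta=0,\infty$ giving the trivial (hence extreme) measures $Q_{0,\infty}$ and $Q_{\infty,0}$. Thus $\{P_\theta:\theta\in[0,\infty]\}\subseteq{\rm ext}\calP\subseteq{\rm ext}^\circ\calP$, and since by Proposition \ref{hom} the map $\pi$ is injective on ${\rm ext}^\circ\calP$ while it already attains every value of $[0,1]$ on the subfamily $\{P_\theta\}$, each element of ${\rm ext}^\circ\calP$ must coincide with some $P_\theta$. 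Hence ${\rm ext}\calP={\rm ext}^\circ\calP=\{P_\theta:\theta\in[0,\infty]\}$ and $\mathcal E=[0,1]$. So I would spend all the effort on the extremality of an interior $P_\theta$.

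\noindent
For that I would verify the regular-path criterion recalled before Lemma \ref{L2}: it suffices that the Martin kernel \re{dim-rat} converges to $\phi_\theta$ along $P_\theta$-almost every path. A standard path is a choice, at each level, of a horizontal step (weight $1$) or a vertical step (weight $a_\ell$ at level $\ell$), so the dimensions are elementary symmetric functions: with $m=h+t$,
\[
d(h',t')=e_{t'}(a_0,\dots,a_{h'+t'-1}),\qquad d(h,t;h',t')=e_{t'-t}(a_{m},\dots,a_{h'+t'-1}).
\]
Splitting the variables into the first $m$ and the rest through the convolution identity $e_{t'}=\sum_{j=0}^{m}e_j(a_0,\dots,a_{m-1})\,e_{t'-j}(a_m,\dots,a_{h'+t'-1})$, the kernel equals $\big(\sum_{j=0}^m e_j(a_0,\dots,a_{m-1})\,e_{t'-j}/e_{t'-t}\big)^{-1}$, the last two symmetric functions taken in $a_m,\dots,a_{h'+t'-1}$. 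Consequently the kernel converges to $\phi_\theta(h,t)=\theta^h/\prod_{k<m}(\theta+a_k)$ exactly when the consecutive ratios $e_{s}(a_m,\dots,a_{h'+t'-1})/e_{s-1}(a_m,\dots,a_{h'+t'-1})$ tend to $\theta$ for every $s$ in a bounded neighbourhood of $t'$: then $e_{t'-j}/e_{t'-t}\to\theta^{\,t-j}$, and the algebraic identity $\sum_{j=0}^m e_j(a_0,\dots,a_{m-1})\theta^{\,t-j}=\theta^{\,t-m}\prod_{k<m}(\theta+a_k)$ delivers precisely $\phi_\theta$.

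\noindent
It remains to show that under $P_\theta$ almost every path has endpoint $(h_n,n-h_n)$ with $e_{t_n-j}/e_{t_n-j-1}\to\theta$ for each fixed $j\ge0$, where $t_n=n-h_n$. This is where \re{divser} enters. Under $P_\theta$ the increments are independent, a vertical step at level $\ell$ occurring with probability $q_\ell=a_\ell/(\theta+a_\ell)$; the number of tails beyond level $m$ is a Poisson--binomial variable with mass at $s$ proportional to $\theta^{-s}e_s(a_m,\dots)$ and mean differing from $\mathbb{E}_\theta T_n$ by a bounded amount, so $\theta^{-1}e_s/e_{s-1}$ is the ratio of consecutive masses and $e_s/e_{s-1}\to\theta$ just says that $s$ lies in the bulk. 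Now $v_n(\theta):=\mathrm{Var}_\theta(T_n)=\sum_{k<n}\theta a_k/(\theta+a_k)^2$ is, for fixed $\theta$, squeezed between constant multiples of $\sum_{k<n}a_k/(1+a_k)^2$, hence $v_n(\theta)\to\infty$ by \re{divser}. Kolmogorov's strong law then gives $|T_n-\mathbb{E}_\theta T_n|=o(v_n(\theta))$ a.s., while a local limit estimate for the Poisson--binomial law shows the consecutive mass ratios tend to $1$ uniformly over the window $|s-\mathbb{E}_\theta T_n|=o(v_n(\theta))$; log-concavity of the $e_s$ (Newton's inequalities) makes $e_s/e_{s-1}$ monotone. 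The two facts combine to give $e_{t_n-j}/e_{t_n-j-1}\to\theta$ a.s.\ for every fixed $j$, which by the previous paragraph shows the kernel converges to $\phi_\theta$ along $P_\theta$-a.e.\ path, and so $P_\theta$ is extreme.

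\noindent
The hard part is precisely this quantitative step: controlling the ratios $e_s/e_{s-1}$ throughout a window of width $o(v_n(\theta))$ about the mean. This is exactly where \re{divser}, equivalently $v_n(\theta)\to\infty$, is indispensable: were it to fail, $T_n$ would keep bounded variance and its centred version would converge to a nondegenerate limit, exhibiting $P_\theta$ as a nontrivial mixture and destroying extremality (as already noted for the case when \re{divser} does not hold). I would obtain the required uniform ratio estimate either from a Poisson--binomial local limit theorem, or, more self-containedly, by tracking $\log(e_s/e_{s-1})$ directly, using the monotonicity from Newton's inequalities together with the normalization $\sum_s\theta^{-s}e_s(a_m,\dots,a_{h'+t'-1})=\prod_{k=m}^{h'+t'-1}(1+a_k/\theta)$ to locate the bulk and bound its spread.
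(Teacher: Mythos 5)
Your reduction of the proposition to the single claim that each interior $P_\theta$ is extreme is exactly the paper's: once that is known, $\pi(P_\theta)=\theta/(a_0+\theta)$ sweeps out all of $[0,1]$, and injectivity of $\pi$ on ${\rm ext}^\circ\calP$ (Proposition \ref{hom}) forces ${\rm ext}\calP={\rm ext}^\circ\calP=\{P_\theta\}$. Where you genuinely diverge is in how extremality is established. The paper disposes of it in one line: under $P_\theta$ the increments of $H_n$ are independent Bernoulli variables, so Mineka's zero--one law for the tail $\sigma$-algebra of sums of independent integer-valued random variables applies; its hypothesis $\sum_n\min(p(n),q(n))=\sum_n\min(\theta,a_n)/(\theta+a_n)=\infty$ is exactly what \re{divser} gives for each fixed $\theta\in(0,\infty)$, and tail triviality is ergodicity. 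You instead verify the Martin-kernel criterion directly: identifying the dimensions with elementary symmetric functions, reducing convergence of \re{dim-rat} to the statement that consecutive ratios $e_s/e_{s-1}$ tend to $\theta$ in a window around the realized $T_n$, and controlling that window by a strong law plus a Poisson--binomial local-limit/log-concavity estimate. Your algebra is correct (the convolution identity and $\sum_j e_j(a_0,\dots,a_{m-1})\theta^{m-j}=\prod_{k<m}(\theta+a_k)$ do reproduce $\phi_\theta$), and the probabilistic skeleton is sound; this route buys more information --- an explicit description of which paths are regular and why they induce $P_\theta$ --- but at the price of the hardest analytic step, the uniform mass-ratio estimate over a window of width $o(v_n(\theta))$, which you explicitly leave as a sketch. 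That estimate is true and provable (log-concavity of $s\mapsto e_s$ from Newton's inequalities plus concentration), but as written your argument is incomplete precisely at its crux, whereas the appeal to Mineka's criterion makes the whole issue disappear. If the goal is only the proposition, the paper's citation is the efficient move; your computation is worth carrying out in full only if you also want the explicit Martin-kernel asymptotics.
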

\proof Under  $P_\theta$  the tail sigma-algebra of $S$ is trivial.
This follows by general
Mineka's criterion   
for the tail sigma-algebra generated by the sequence of sums of independent random variables
(see \cite{Mineka}, Theorem 1 and Corollary on p. 169).
Mineka's condition specifies as
$\sum_n  \min(p(n),q(n))=\infty$, and follows from our assumption for every $\theta\notin\{0,\infty\}$.
As $\theta$ varies, $\pi$ runs over the full range $[0,1]$, thus by Proposition \ref{hom}
all extremes are found.
\endpf

See \cite{Baker} for  extensions to more general space-time  lattice walks,
and \cite{aldous} for conditions of triviality of the exchangeable sigma-algebra
for multivalued processes.

\subsubsection{The case of discrete boundary} We assume now that 
$\sum_n p(n)q(n)<\infty$ for $\theta=1$, and take $P^*=P_1$ for the reference measure, which we wish to decompose 
in ergodic components.

Recall that the elementary symmetric  function of degree $k$
in the variables  $x_1,x_2,\dots$  is 
the infinite series 
$${\tt e}_k(x_1,x_2,\dots)=\sum_{i_1<\dots<i_k} x_{i_1}\cdots x_{i_k},$$
This becomes the elementary symmetric polynomial ${\tt e}_k(x_1,\dots,x_n)$
upon substituting $x_m=0$ for $m>n$. 

\par Introduce the odds ratios $r_n:=p(n)/q(n)$.  
The conditional probability given $S_{h+t}=(h,t)$ that $S$
has $h$ heads 
 at times $\{n_1,\dots,n_h\}\subset \{1,\dots, h+t\}$ is 
$$  {r_{n_1}\cdots r_{n_h}\over e_h(r_1,\dots,r_{h+t})}\,.$$

 Let $L_n=\{m\leq n: p(m)\leq q(m)\},   M_n=\{m\leq n: p(m)> q(m)\}, L=\cup_n L_n, M=\cup_n M_n$.   
Since 
$$\sum_n p(n)q(n)<\infty\Longleftrightarrow
\sum_{n\in L} p(n)<\infty {\rm ~~and~~}\sum_{n\in M} q_n<\infty,$$ 
the Borel-Cantelli lemma implies that 
 $P^*$-almost surely $S$ has finitely many $(1,0)$-increments at times $n\in L$ and finitely many $(0,1)$-increments at times  $n\in M$.
The latter means that $S_n=(H_n,T_n)$ is essentially converging, i.e. 
$$(H_n,T_n)-(\#M_n,\#L_n)\to (Z,-Z)\quad P^*{\rm -a.s.},$$
for some integer-valued random variable $Z$.
Let $R$ be the  
range of $Z$; this  is either $\mathbb Z$, 
or a semi-infinite integer interval 
if $M$ or $L$ is finite.

The variable $Z$ is tail-measurable, thus conditioning $P^*$ on the value of $Z$ we obtain a 
countable family of probabilities $\{P_z^*, z\in {\mathbb Z}\}\subset{\mathcal P}$.
Every $P_z^*$ is ergodic, since it is supported by a single class of equivalent paths
which eventually coincide with the path $h_N=\#M_N-z,~t_N=\#L_N+z$.

\begin{proposition} If {\rm (\ref{divser})} does not hold 
then ${\rm ext}{\mathcal P}=\{P_z^*, z\in R\}\cup\{Q_{0,\infty},Q_{\infty,0}\}$.
\end{proposition}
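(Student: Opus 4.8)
The inclusion $\{P_z^*:z\in R\}\cup\{Q_{0,\infty},Q_{\infty,0}\}\subseteq{\rm ext}\calP$ is already in hand: each $P_z^*$ was shown to be ergodic, and the two trivial measures are extreme. So the content of the proposition is the reverse inclusion. My plan is to use the general fact quoted above, that ${\rm ext}\calP\subseteq{\rm ext}^\circ\calP$, and thereby reduce everything to identifying the measure induced by an arbitrary regular path. Concretely I would evaluate the Martin kernel \rem{dim-rat} explicitly and decide exactly which paths $(h',n'-h')$ make it converge.

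The central computation rests on the elementary-symmetric-function description of the elementary measures. Along a path, the kernel equals, up to a factor depending only on the fixed base point $(h,t)$ (through $n=h+t$),
\[
\frac{e_{h'-h}(r_{n+1},\dots,r_{n'})}{e_{h'}(r_1,\dots,r_{n'})},\qquad r_m=1/a_m .
\]
To control this I pass to the generating function $G_N(x)=\prod_{m=1}^N(1+xr_m)=\sum_k e_k(r_1,\dots,r_N)x^k$ and pull out the large factors. Writing $\#M_N$ for the number of $m\le N$ with $r_m>1$,
\[
G_N(x)=x^{\#M_N}\Big(\prod_{m\in M_N}r_m\Big)\prod_{m\in M_N}\Big(1+\tfrac1{xr_m}\Big)\prod_{m\in L_N}(1+xr_m).
\]
Since \rem{divser} fails we have $\sum_{m\in M}r_m^{-1}<\infty$ and $\sum_{m\in L}r_m<\infty$, so the last two products converge as $N\to\infty$ to a fixed function $\Phi(x)=\prod_{M}(1+\tfrac1{xr_m})\prod_{L}(1+xr_m)$, analytic and zero-free in an annulus. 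Reading off Laurent coefficients yields the key asymptotics $e_{\#M_N+z}(r_1,\dots,r_N)\sim(\prod_{M_N}r_m)\,\Phi_z$ with $\Phi_z=[x^z]\Phi(x)$, together with the analogous statement for the tail sequence $r_{n+1},\dots,r_{n'}$, in which $\Phi$ is replaced by its tail product $\Phi^{>n}$.

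Substituting these asymptotics, the factors $\prod_{M_{n'}}r_m$ cancel and the kernel converges, as $n'\to\infty$, precisely when the integer $h'-\#M_{n'}$ stabilizes. If $h'-\#M_{n'}\to z$ the limit is a constant multiple of $\Phi^{>n}_{z-(h-\#M_n)}/\Phi_z$; expanding $\Phi$ one sees $\Phi_z>0$ exactly when $z\in R$, so this is the probability function of $P_z^*$, while for $z\notin R$ the leading term vanishes and no genuine limit in $\calP$ arises. If $h'-\#M_{n'}$ fails to converge the ratios oscillate and the path is not regular. It then remains to treat the two directions $h'-\#M_{n'}\to\pm\infty$ and check that they give $Q_{\infty,0}$, resp.\ $Q_{0,\infty}$. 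This identifies ${\rm ext}^\circ\calP$ with $\{P_z^*:z\in R\}\cup\{Q_{0,\infty},Q_{\infty,0}\}$, and ${\rm ext}\calP\subseteq{\rm ext}^\circ\calP$ forces equality.

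The main obstacle is exactly these two limiting directions, where $\prod_{M_{n'}}r_m$ is no longer the correct scale and the coefficient asymptotics degenerate ($\Phi_z\to0$). Here I would argue by monotonicity instead of by the kernel: by Lemma \ref{L2} the induced measure, if it exists, is stochastically larger (resp.\ smaller) than every $P_z^*$, and since $\pi(P_z^*)\to1$ (resp.\ $\to0$), the compactness of the boundary in Proposition \ref{hom} forces the limit to be $Q_{\infty,0}$ (resp.\ $Q_{0,\infty}$). A secondary technical point is justifying the interchange of the $N\to\infty$ limit with the extraction of Laurent coefficients, i.e.\ dominated convergence for $G_N$ on the circle $|x|=1$, and keeping track of the cases in which $M$ or $L$ is finite, since these are what make $R$ a proper semi-infinite interval rather than all of $\Z$ and cause one of the trivial measures to coincide with the endpoint $P_z^*$ at the finite end of $R$.
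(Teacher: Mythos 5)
Your generating-function analysis of the Martin kernel is a legitimate and genuinely different route for the part of the argument that the paper dispatches in one sentence: the paper handles paths with $|h'-\#M_{n'}|$ bounded by the coupling/ordering argument behind Lemma \ref{L2} (``it is clear that \dots $Q_{h_N,N-h_N}$ may converge only to some $P_z^*$''), whereas you extract the limit explicitly from the Laurent coefficients of $\prod_m(1+xr_m)$ after factoring out $x^{\#M_N}\prod_{M_N}r_m$. That computation is sound --- convergence of $\prod_{M}(1+\tfrac1{xr_m})$ and $\prod_{L}(1+xr_m)$ is precisely the failure of \rem{divser} --- and it buys more than the paper's one-liner: it identifies the limiting probability functions and essentially reproves the formula for $P^*(Z=z)$. (A minor point: the case $z\notin R$ is vacuous rather than degenerate; if, say, $L$ is finite, a path with $h'-\#M_{n'}\to z>\#L$ would eventually have $t'=\#L_{n'}-z<0$.)

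The genuine gap sits in your last paragraph, at exactly the point where the paper concentrates its effort. You dispose of the directions $h'-\#M_{n'}\to\pm\infty$ by writing ``since $\pi(P_z^*)\to1$ (resp.\ $\to0$)'' --- but this is established nowhere earlier in the paper and does not follow from Proposition \ref{hom} or Lemma \ref{L2}: a priori the increasing sequence $\pi(P_z^*)$ could converge to some $\pi_\infty<1$, leaving room for further extreme measures lying stochastically above all the $P_z^*$. Proving this limit is the entire content of the paper's proof. It first treats the case $\sum_n p(n)<\infty$, where conditioning on $Z$ amounts to conditioning on the total number of heads $H$, and shows $P^*(H_1=1\mid H=h)=r_1e_{h-1}(r_2,r_3,\dots)/e_h(r_1,r_2,\dots)\to1$ using the decomposition $e_h(r_1,r_2,\dots)=r_1e_{h-1}(r_2,r_3,\dots)+e_h(r_2,r_3,\dots)$ and a termwise estimate based on $\max_{n>h}r_n\to0$; it then reduces the general case by introducing the number $H'$ of heads at times in $L\cup\{1\}$, and using $H'\geq Z$ together with independence of the increments. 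Your framework can deliver the same conclusion --- it amounts to showing that the Laurent coefficients satisfy $\Phi_z/\Phi_{z-1}\to0$ as $z\to+\infty$ --- but as written this step is asserted rather than proved, so the argument is incomplete precisely where it needs to be quantitative.
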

\proof
We wish to prove that the list of extremes is full.
Using the ordering arguments as in section \ref{gener}, it is clear that  if $|h_N-\#M_N|$ is bounded then
$Q_{h_N,N-h_N}$ may converge only to some $P_z^*$. 
By  Proposition \ref{hom},
it is enough to show that  $P_z^*(H_1=1)\to 1$ or $0$ as $z\to+\infty$ or $-\infty$, respectively.
We shall focus on the first relation, the second being analogous.

\par Consider  first the special case $\sum_n p(n)<\infty$, when
$H_n$ converges $P^*$-almost surely to some finite random variable $H$.
Then also $M$ is finite, $\sum_n r_n<\infty$ and  
$$e_k(r_1,r_2,\dots)<\left( \sum_n r_n\right)^k<\infty.$$
Since $M$ is finite, conditioning on a large value of $Z$ is the same as conditioning on a large value of $H$.
Thus it is enough to show that
$$P^*(H_1=1|H=h)={r_1 e_{h-1}(r_2,r_3,\dots)\over e_h(r_1,r_2,\dots)}\to 1,~~~~{\rm as}~h\to\infty.$$
Decomposing $e_h(r_1,r_2,\dots)=r_1e_{h-1}(r_2,r_3,\dots)+e_h(r_2,r_3,\dots)$ we are reduced to checking that
$e_h(r_2,r_3,\dots)/e_{h-1}(r_2,r_3,\dots)\to 0$. The latter follows from term-wise estimates 
$r_{i_1}\dots r_{i_h}\leq s_h r_{i_1}\dots r_{i_{h-1}}$ with $s_h:=\max_{n>h}r_n\to 0$.

\par In the general case, we define $H'$ to be the number of heads at times $n\in L\cup\{ 1\}$. 
By independence of the increments we get exactly as above $P^*(H_1=1|H'=h)\to 1$ as $h\to\infty$.
Now $P^*(H_1=1|Z=z)\to 1$ for $z\to\infty$ follows from this 
and $H'\geq Z$ by conditioning on the number of tails at times $n\in M\setminus\{1\}$.
\endpf
 
The weights in the decomposition of $P^*$ over ${\rm ext}\calP$ are the proibabilities
$$P^*(Z=z)=\sum_{i,j:\, i-j=z} e_i(r_\ell, \ell\in L)e_j(r_m^{-1},m\in M)\prod_{\ell\in L}q(\ell)\prod_{m\in M}p(m).$$
The boundary $\mathcal E$ has the unique accumulation point at $1$ iff $\sum_n p(n)<\infty$,
the unique accumulation point at $0$ iff $\sum_n q(n)<\infty$, 
and if both series diverge but $\sum_n p(n)q(n)<\infty$ then the only accumulation points are $0$ and $1$.

\subsection{Generalized Stirling-II}
The generalized Stirling-II triangle has weight function $w_1(h,t)=1, w_0(h,t)=b_h$, where $b_h>0$.
The measure $P_\theta$ has 
probability function 
$$\phi_\theta(h,t)={(\theta-b_0)\dots(\theta-b_{h-1})\over \theta^{h+t}},$$
where either $\theta>\sup_h b_h$, or $\theta=b_m$ for some $m$ such that $b_m$ is the strict maximum of $b_0,\ldots,b_m$.
We shall assume $\sup_h b_h<\infty$, since the opposite situation is covered by Proposition \ref{bound-b}.
Each such $P_{b_m}$ is the ergodic measure coinciding with $Q_{m,\infty}$, hence  
we focus on $P_\theta$'s with $\theta>\sup_h b_h$.

The process $S$ under $P_\theta$ can be seen as a  coupon-collector's sampling scheme.
The sampling starts at time $0$ with coupon labelled $0$. Each time $n\geq 0$ when coupons $0,\dots,h$  
are in the collection, a new coupon to be labelled 
$h+1$ is drawn with probability $\lambda_h=b_h/\theta$.

Slightly more generally, let $P$ be a probability under which $\xi_0,\xi_1,\dots$ are independent geometric 
variables with
$$P(\xi_h=k)=\lambda_h^{k-1}(1-\lambda_h), ~~~k\in {\mathbb N}.$$
Define an inhomogeneous `renewal process'
$H_n:=\max\{h\in {\mathbb Z}_+:\sum_{j=0}^{h-1} \xi_j\leq n\}$, 
and $T_n:=n-H_n$, $S_n=(H_n,T_n)$.

\begin{lemma}  \label{ttail} The tail sigma-algebra of $S$ is trivial if and only if
 $\sum_{h=0}^\infty \lambda_h=\infty$.
\end{lemma}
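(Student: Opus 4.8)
The plan is to reduce everything to a coalescence criterion for triviality of the tail sigma-algebra: \emph{if for every pair of states $x=(h,N-h)$ and $y=(h',N-h')$ of the same coordinate sum $N$ and of positive probability there is a coupling of the chain started at $x$ and the chain started at $y$ that reach a common state in finite time almost surely, then the tail sigma-algebra is trivial.} I would justify this exactly as in the proof of Lemma \ref{L2}, using that $S$ is Markov: for a tail event $A$ one has $\prob(A\giv\mathcal F_N)=\prob(A\giv S_N)$, and coalescence forces $\prob(A\giv S_N=x)$ to be the same for all reachable $x$ of coordinate sum $N$, because the two coupled paths agree from the meeting time on while $A$ depends only on arbitrarily late coordinates. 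Thus $\prob(A\giv\mathcal F_N)$ is almost surely a constant $a_N$, and L\'evy's upward theorem ($\prob(A\giv\mathcal F_N)\to\mathbf 1_A$) forces $\mathbf 1_A$ to be constant.

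The easy direction is $\sum_h\lambda_h<\infty$, which I would dispose of first. The events $\{\xi_h\ge 2\}$ are independent with probabilities $\lambda_h$, so by Borel--Cantelli $\xi_h=1$ for all but finitely many $h$; hence $T_\infty:=\lim_n T_n=\sum_h(\xi_h-1)$ is an almost surely finite, genuinely random integer. For every $N$ the quantity $T_\infty$ is recovered from $(S_N,S_{N+1},\dots)$ as $T_N$ plus the number of later tails, so $T_\infty$ is tail-measurable and nondegenerate, and the tail sigma-algebra is nontrivial.

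It remains to show that $\sum_h\lambda_h=\infty$ forces coalescence. Take $h>h'$ and run the two chains with independent holding times; let $\tau^{(i)}_\ell$ be the time chain $i$ first reaches level $\ell$ (defined for all large $\ell$, since every $\xi$ is finite) and set $G_\ell:=\tau^{(2)}_\ell-\tau^{(1)}_\ell$. This lag starts positive ($G_h>0$, as chain $2$ must still climb from $h'$ to $h$) and increments as $G_{\ell+1}-G_\ell=\xi^{(2)}_\ell-\xi^{(1)}_\ell=:W_\ell$, an independent sequence of symmetric integer steps with $\ex W_\ell=0$. The geometric heart of the argument is that chain $i$ occupies level $\ell$ over $[\tau^{(i)}_\ell,\tau^{(i)}_\ell+\xi^{(i)}_\ell)$, and these intervals overlap — which is simultaneous occupancy of a common state, i.e.\ coalescence — as soon as the lag walk changes sign: if $G_\ell>0$ while $G_{\ell+1}\le 0$ then $W_\ell\le-G_\ell$, so $\xi^{(1)}_\ell\ge\xi^{(2)}_\ell+G_\ell>G_\ell$, meaning chain $2$ arrives at level $\ell$ before chain $1$ has left it.

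Hence it suffices to prove that $G_\ell$ changes sign, and for this I would show $\limsup_\ell G_\ell=+\infty$ and $\liminf_\ell G_\ell=-\infty$ almost surely. Since $W_\ell$ is integer-valued with $\prob(W_\ell\ne 0)=2\lambda_\ell/(1+\lambda_\ell)\ge\lambda_\ell$, the hypothesis gives $\sum_\ell\prob(|W_\ell|\ge 1)=\infty$, so by Kolmogorov's three-series theorem $\sum_\ell W_\ell$ does not converge. The partial sums form a mean-zero martingale, which converges on $\{\sup_\ell|G_\ell|<\infty\}$; that event is therefore null, the sequence is almost surely unbounded, and by symmetry $\limsup$ and $\liminf$ are negatives of one another, hence both infinite. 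The walk then oscillates, the two chains coalesce almost surely, and combined with the criterion of the first paragraph this yields triviality of the tail sigma-algebra. The main obstacle I anticipate is precisely the middle step — encoding coalescence of the two renewal chains into the one-dimensional lag walk and observing that a large overshoot $W_\ell$ of $0$ is harmless because it is accompanied by a correspondingly long holding interval, so that every sign change is a true simultaneous occupancy; once this correspondence is secured, the recurrence input collapses to the clean martingale-and-symmetry argument.
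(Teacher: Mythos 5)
Your proof takes a genuinely different route from the paper's. The paper handles the divergent case by time-changing $S$ into the renewal sequence $\tau_h=\xi_0+\dots+\xi_{h-1}$, identifying the tail sigma-algebra of $S$ with that of $(\tau_h)$ up to null sets, and then citing Mineka's criterion for sums of independent integer-valued random variables, whose hypothesis $\sum_h\sum_i\min(P(\xi_h=i),P(\xi_h=i+1))=\sum_h\lambda_h=\infty$ is exactly the assumed divergence. Your coalescence argument is in effect a self-contained proof of the case of Mineka's theorem that is needed: the lag walk $G_\ell$ with increments $W_\ell=\xi^{(2)}_\ell-\xi^{(1)}_\ell$ is precisely the symmetrized walk behind the usual ``Mineka coupling,'' and your reduction --- a sign change of $G$ forces simultaneous occupancy of a level because the overshoot is absorbed by the holding interval --- is correct and is the right geometric observation. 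The easy direction coincides with the paper's (a finite, nondegenerate, tail-measurable limit of $T_n$). What your route buys is independence from the external reference to Mineka; what it costs is that you must establish the oscillation of the lag walk yourself, and that is where your one weak step sits.

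The assertion that a mean-zero martingale converges on $\{\sup_\ell|G_\ell|<\infty\}$ is not a valid general principle: without a control on the increments (e.g.\ uniform boundedness, or integrability of $\sup_\ell|W_\ell|$) there exist martingales with almost surely finite supremum that diverge, and your $W_\ell$ are unbounded --- indeed if $\lambda_\ell\to1$ quickly the holding times are huge and $\sup_\ell|W_\ell|$ need not even be finite in expectation. The conclusion you need is nevertheless true here because the $W_\ell$ are \emph{independent and symmetric}: for such summands, almost sure boundedness of the partial sums implies almost sure convergence (L\'evy's equivalence theorem / the It\^o--Nisio theorem, or the L\'evy--Ottaviani maximal inequality combined with the three-series theorem), which contradicts your three-series computation, so $\sup_\ell|G_\ell|=\infty$ a.s., and then your tail $0$--$1$ law plus symmetry argument gives $\limsup G_\ell=+\infty$ and $\liminf G_\ell=-\infty$. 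Alternatively you can sidestep the issue entirely by replacing the independent coupling of the holding times with the Mineka coupling $\xi^{(2)}_\ell\in\{\xi^{(1)}_\ell-1,\xi^{(1)}_\ell,\xi^{(1)}_\ell+1\}$, which makes $W_\ell\in\{-1,0,1\}$ with $P(W_\ell=\pm1)$ comparable to $\lambda_\ell$, so that the bounded-increment martingale dichotomy applies directly. With either repair the proof is complete.
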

\proof  Let ${\mathcal S}_n=\sigma\{S_0,\dots,S_n\}$ and ${\mathcal S}=\cap_{k\geq n}{\mathcal S}_k$ 
be the tail sigma-algebra of $S$.
If the series in $\sum_n\lambda_n$ converges, the probability $P(\lim_{n\to\infty} T_n=0)=\prod_{h=0}^\infty (1-q_h)$
is strictly between $0$ and $1$, hence $\mathcal S$ contains a nontrivial event.
Moreover,  $T_n$ converge to a finite variable $P$-a.s.

Suppose the series diverges. 
Since in 
any case $H_n\to\infty$ $P$-a.s., we can define 
`renewal' times $\tau_h:=\min\{n: H_n=h\}=\xi_0+\dots+\xi_{h-1}$. 
Let ${\mathcal G}_h$ be the sigma-field generated by the events
$\{\tau_h=n\}\cap A$ for $A\in {\mathcal S}_n$ and $n\geq 0$,
and define ${\mathcal G}:=\cap_{h=0}^\infty {\mathcal G}_h$.
We assert that ${\mathcal G}$  coincides with $\mathcal S$,  up to zero events.
Indeed, we have $\tau_n\geq n$, whence ${\mathcal G}_n={\mathcal S}_{\tau_n}\subset {\mathcal S}_n$
and ${\mathcal G}\subset{\mathcal S}$.
In the other direction,  $\{\tau_h<n\}\cap A\in {\mathcal G}_h$ provided that $A\in{\mathcal S}_n$, whence
${\mathcal G}\supset{\mathcal S}$ obtains as $n\to\infty$ then $h\to\infty$.
Mineka's criterion  
(\cite{Mineka}  Theorem 1 and Corollary on p. 169)
becomes
$$\sum_{h=0}^\infty \sum_{i=0}^\infty \min(P(\xi_h=i),P(\xi_h=i+1))=\sum_{h=0}^\infty \lambda_h,$$
from which $\mathcal G$ is trivial.   
\endpf

\begin{proposition}\label{strbo}
If $\sum_h b_h<\infty$ then each $P_\theta$ for $\theta> \sup b_h$ is decomposable,
and ${\rm ext}\calP=\{Q_{m,\infty}: b_m>b_j ~{\rm ~for~}j<m\}\cup\{Q_{\infty,m}: m\geq 0\}$;
then $\mathcal E$
is a discrete set with the only accumulation point at $\pi=1-b_0/( \sup_n b_h)$.

If $\sum_h b_h=\infty$ then
${\rm ext}\calP=\{Q_{m,\infty}: b_m>b_j ~{\rm ~for~}j<m\}\cup\{P_\theta: \theta>\sup_h b_h\}$;
and 
${\mathcal E}$ contains the interval $[1-b_0/( \sup_h b_h), 1]$.
\end{proposition}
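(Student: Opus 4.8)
\emph{Proof plan.} The plan is to extract the ergodic measures from the renewal picture set up before Lemma \ref{ttail} and then to close up the boundary by means of the $\pi$-parameterization of Proposition \ref{hom}. First I would note that under $P_\theta$ the tail probability is $q(h,t)=b_h/\theta=:\lambda_h$, a function of $h$ alone, so $S$ is precisely the inhomogeneous renewal process of Lemma \ref{ttail} with these $\lambda_h$, and $\pi(P_\theta)=p(0,0)=1-b_0/\theta$. Lemma \ref{ttail} then disposes of the ergodicity of $P_\theta$ at once: its tail $\sigma$-algebra is trivial iff $\sum_h\lambda_h=\infty$, equivalently $\sum_h b_h=\infty$. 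Hence when $\sum_h b_h=\infty$ every $P_\theta$ with $\theta>\sup_h b_h$ is ergodic and, as $\theta$ runs over $(\sup_h b_h,\infty)$, the value $\pi=1-b_0/\theta$ sweeps out $(1-b_0/\sup_h b_h,1)$; when $\sum_h b_h<\infty$ every such $P_\theta$ is non-trivially decomposable, which already gives the first assertion in that case.

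Next I would exhibit the two explicit families of extremes. The finite-head measures are supplied by Proposition \ref{bound-b} with $a\equiv 0$: $Q_{m,\infty}$ exists and is ergodic exactly when $b_m>b_j$ for all $j<m$, and then $Q_{m,\infty}=P_{b_m}$ has $\pi=1-b_0/b_m$. For the finite-tail measures I would solve the forward recursion \re{forward-rec} on $J_m$; since $w_1\equiv 1$, the value $\phi(\cdot,m)$ is constant in $h$ and the recursion propagates it downward as $\phi(h,t)=\mathsf{h}_{m-t}(b_h,b_{h+1},\dots)/\mathsf{h}_m(b_0,b_1,\dots)$, where $\mathsf{h}_k$ is the complete homogeneous symmetric function. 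These are finite precisely when $\sum_h b_h<\infty$, so $Q_{\infty,m}$ with $m\ge 1$ exists only in that case; there it is ergodic, being carried by the single finite-permutation orbit of paths with exactly $m$ tails, and
$$\pi(Q_{\infty,m})=\frac{\mathsf{h}_m(b_1,b_2,\dots)}{\mathsf{h}_m(b_0,b_1,\dots)}\longrightarrow 1-\frac{b_0}{\sup_h b_h}\qquad(m\to\infty),$$
the limit being read off from the dominant singularity at $z=1/\sup_h b_h$ of $\prod_j(1-b_jz)^{-1}$. In the same case $T_\infty=\sum_h(\xi_h-1)$ is $P_\theta$-a.s.\ finite and tail-measurable, so conditioning on its value realizes the ergodic decomposition $P_\theta=\sum_m P_\theta(T_\infty=m)\,Q_{\infty,m}$.

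It remains to prove these lists exhaustive, and this is the step I expect to be the main obstacle. By Proposition \ref{hom} the sequential boundary is compact and faithfully coordinatized by $\pi$, and by Lemma \ref{L2} it is totally ordered, so it suffices to show every regular path $(h_n,n-h_n)$ induces one of the measures already found. If $h_n$ stays bounded the limit is some $Q_{m,\infty}$ and if $n-h_n$ stays bounded it is some $Q_{\infty,m}$; the real work lies in the regime $h_n\to\infty,\ n-h_n\to\infty$. Here I would describe the elementary measure $Q_{h,t}$ through its weights $\prod_j b_j^{c_j}$ on the height-profile $(c_0,c_1,\dots)$ of the tails and use that these weights concentrate the tails at the heights of maximal $b_j$. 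When $\sum_h b_h<\infty$ this forces every such path to converge to $Q_{M,\infty}$ with $b_M=\max_h b_h$ (the surplus heads are pushed beyond every fixed time), so no new extreme arises; the boundary is the announced union and $\mathcal E$ is discrete with unique accumulation point $1-b_0/\sup_h b_h=\pi(Q_{M,\infty})$, approached from above by the sequence $\pi(Q_{\infty,m})$. When $\sum_h b_h=\infty$, the divergence of $\sum_{j<h_n} b_j/(\theta-b_j)$ lets the path select a unique $\theta$ from the asymptotic balance of $n-h_n$ against $\sum_{j<h_n} b_j/(\theta-b_j)$, and the limit is the ergodic $P_\theta$; together with the $Q_{m,\infty}$ this yields the stated boundary and $\mathcal E\supseteq[1-b_0/\sup_h b_h,1]$. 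The delicate point throughout is the asymptotics of these ratios of complete homogeneous symmetric functions, equivalently of the Martin kernel \re{dim-rat}, and it is exactly the convergence versus divergence of $\sum_h b_h$ that decides whether the both-to-infinity limits collapse onto $Q_{M,\infty}$ or fill out the continuum $\{P_\theta\}$.
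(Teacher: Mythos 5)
Your proof follows the same route as the paper's: ergodicity of $P_\theta$ in the divergent case comes from Lemma \ref{ttail} (Mineka's criterion applied to the renewal picture with $\lambda_h=b_h/\theta$), and in the convergent case the a.s.\ convergence of $T_n$ to a finite terminal value, followed by conditioning on that value, yields the decomposition into the $Q_{\infty,m}$. The paper's own proof consists of essentially just these two observations; your extra material --- the complete-homogeneous-symmetric-function formula for $Q_{\infty,m}$, the computation of $\pi(Q_{\infty,m})$ and its limit $1-b_0/\sup_h b_h$, and the sketch of why regular paths with $h_n,t_n\to\infty$ induce no new extremes --- supplies details that the paper leaves implicit (and correctly reads the paper's ``conditioning on the terminal yields $Q_{m,\infty}$'' as meaning $Q_{\infty,m}$), so it is an elaboration of the same argument rather than a different one.
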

\proof If the series converges, then $T_n$ converge weakly under $P_\theta$, with $\theta> \sup b_h$,
to a finite random  variable. Conditioning on the terminal yields $Q_{m,\infty}$.

If the series diverges then every $P_\theta$ with 
$\theta> \sup b_h$ is ergodic by Lemma \ref{ttail}.
\endpf

\noindent
{\bf Remark}
This result disproves Theorem  in \cite{Kerov} in case $\sum_h b_h<\infty$. 
The cited theorem already disagreed with the  instance of $q$-Pascal triangle discussed in \cite{Kerov}.

\subsection{The $\q$-Pascal triangle}

A $\q$-analogue of the Pascal triangle \cite{Kerov, GO1, GO2}
is the generalized Stirling-II triangle with
weights $w_0(h,t)=\q^h, ~w_1(h,t)=1$, and  dimension
$d(h,t)={h+t\choose t}_\q$ equal to the $\q$-binomial coefficient.
For integer $q$ equal to a power of a prime number, the lookback similar processes 
on the $q$-Pascal triangle encode homogeneous measures on Grassmanians
and on more general flag manifolds in the infinite-dimensional space $({\mathbb F}_q)^\infty$ over the
Galois field ${\mathbb F}_\q$ \cite{GO2, GO3}.

Suppose $0<\q<1$.
By Proposition \ref{strbo} the only ergodic measures are $Q_{m,\infty}$, 
which are given explicitly by the probability function
$$\phi(h,t)= \q^{(m-t)h}\prod_{j=0}^{t-1}(1-\q^{m-j}), ~~~~m=1,2,\dots.$$
The boundary is ${\mathcal E}=\{\q^{m}, m\geq 0\}\cup\{0\}$.
The related problem of moments has a curious form:

\begin{corollary}
A sequence $\phi(\cdot,0)$ with $\phi(0,0)=1$ is representable as 
$$\phi(n,0)=\sum_{m\in\{0,1,\dots,\infty\}} \mu(\{\q^m\})\q^{mn},$$
where $\mu$ 
is some probability measure on $\{\q^{m}, m\geq 0\}\cup\{0\}$,
if and only if the associated bivariate array satisfying $\phi(h,t)=\q^h\phi(h,t+1)+\phi(h+1,t)$
is nonnegative.
\end{corollary}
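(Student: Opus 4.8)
The plan is to read the corollary off the extremal decomposition already established for the $\q$-Pascal triangle, exploiting the fact that on the bottom row $t=0$ the extreme probability functions degenerate into the \emph{classical} moment kernel $\pi^n$. The whole statement then becomes a dictionary between two descriptions of the same object: nonnegative solutions of the recursion on one side, and points of the simplex over the discrete boundary on the other.

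First I would record the correspondence from Section 2: a bivariate array satisfying $\phi(h,t)=\q^h\phi(h,t+1)+\phi(h+1,t)$ with $\phi(0,0)=1$ is nonnegative if and only if it is the probability function of some $P\in\calP$; this is precisely the characterization of probability functions as the nonnegative solutions of (\ref{forward-rec}). Next I would invoke the boundary description for $0<\q<1$ obtained from Proposition \ref{strbo}: the extreme measures are the $Q_{m,\infty}$ with probability functions $\phi_m(h,t)=\q^{(m-t)h}\prod_{j=0}^{t-1}(1-\q^{m-j})$ together with the trivial measure located at $\pi=0$, so that $\mathcal E=\{\q^m,m\geq0\}\cup\{0\}$. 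Putting $t=0$ annihilates the product and leaves $\phi_m(n,0)=\q^{mn}=(\q^m)^n$, while the trivial measure contributes the row $\delta_{n0}=0^n$; thus every extreme array restricts on the bottom row to the moment sequence $\pi^n$ of its boundary point $\pi=\q^m$, with $\pi=0$ captured by the convention $\q^\infty:=0$.

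To prove that nonnegativity implies representability I would apply existence and uniqueness of the extremal decomposition recalled in Section 2: the nonnegative array is the probability function of some $P\in\calP$, which decomposes uniquely as a convex mixture $P=\sum_m \mu(\{\q^m\})\,Q_{m,\infty}+\mu(\{0\})\,Q_{0,\infty}$ over the discrete boundary, for a unique probability measure $\mu$ on $\mathcal E$. Passing to probability functions and evaluating at $t=0$ turns this into $\phi(n,0)=\sum_{m\in\{0,1,\dots,\infty\}}\mu(\{\q^m\})\,\q^{mn}$, the summand $m=\infty$ carrying $\mu(\{0\})$. For the reverse implication I would use linearity of the weighted-differencing map $\phi(\cdot,0)\mapsto\phi$ noted in Section 2: this map sends the row $n\mapsto\q^{mn}$ to the array $\phi_m$ and the row $n\mapsto 0^n$ to the trivial array, so a convex combination of these rows extends to the same convex combination of the arrays $\phi_m\geq0$, which is again nonnegative.

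The only delicate point, and the place I expect the minor obstacle, is the accumulation point $\pi=0$, i.e. the $m=\infty$ summand, together with the legitimacy of interchanging the differencing map with an infinite sum. Both are handled by working one lattice point at a time: the differencing map computes each array entry from finitely many row values, so it commutes with the mixture term by term, while the mixture itself converges because it is assembled from genuine probabilities $Q_{m,\infty}(S_{h+t}=(h,t))\in[0,1]$ dominated by $\sum_m\mu(\{\q^m\})\leq1$. Weak compactness of the boundary (Proposition \ref{hom}) together with its discreteness guarantees that $\mu$ is a bona fide probability measure on $\{\q^m,m\geq0\}\cup\{0\}$, which closes the dictionary.
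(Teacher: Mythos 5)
Your argument is correct and is essentially the derivation the paper intends: the corollary follows at once from the Section~2 facts that nonnegative normalized solutions of (\ref{forward-rec}) are exactly the probability functions and that each $P\in\calP$ decomposes uniquely over the extremes, combined with the explicit boundary $\{\q^m, m\ge 0\}\cup\{0\}$ whose probability functions restrict on the row $t=0$ to $\q^{mn}$ (and to $0^n$ for the trivial measure), your term-by-term interchange being justified exactly as you say. The only blemish is notational and inherited from the paper itself: the measures written $Q_{m,\infty}$ in the $\q$-Pascal subsection are supported on $\{t\le m\}$ and should be $Q_{\infty,m}$, so in your displayed mixture the $m=0$ summand and the trivial measure at $\pi=0$ are distinct objects that accidentally share the symbol $Q_{0,\infty}$.
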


A transformation of weights yields $w'_0(h,t)=\q^{h+t}, w_1'(h,t)=1$, which is a generalized Stirling-I triangle,
thus inhomogeneous Bernoulli processes with probability for head $p(n)= \theta/(\theta+\q^n)$ belong to $\calP$.
By Proposition \ref{divser} these measures are not ergodic, see \cite{GO1} for their explicit decomposition 
over the boundary.
A third family of measures are the $\q$-analogues of P{\'o}lya urn processes \cite{GO1}, with transition probabilities
$$p(h,t)= {[\alpha+h]_\q\over [\alpha+\beta+h+t]_\q}, ~~~q(h,t)={[\beta+t]_\q\over[\alpha+\beta+h+t]_\q}\q^{h+\alpha},$$
where $\alpha,\beta>0$ and $[m]_\q:=1+\q+\dots+\q^{m-1}$.

The case $\q>1$ is treated  by a transformation to the weights 
$w_0'(h,t)=1, w_1'(h,t)=\q^{-t}$ and transposition of ${\mathbb Z}_+^2$ about the diagonal, which
establish equivalence of the graphs with parameters $\q$ and $\q^{-1}$.

\section{Generalized Eulerian triangles}

We define a generalized Eulerian triangle to be a weighted Pascal graph with 
$$w_1(h,t)=t+a,~~w_0(h,t)=h+b,$$
where $a,b>0$. The classical Eulerian triangle is the instance $a=b=1$, where
dimensions are Eulerian numbers that count the number of descents in permutations.
Since the balance condition is fulfilled,  there is a natural fully supported measure
$P^*$ with transition probabilities
$$p(h,t)={t+a\over h+t+a+b},~~~q(h,t)= {h+b\over h+t+a+b}.$$

On the other hand, changing weights to 
$w_1(h,t)=(\theta-h+b)(t+a), w_0(h,t)=(\theta+t+a)(h+b)$
for $\theta=m+a$, $m=0,1,\dots$, we obtain $Q_{m,\infty}$ with 
transition probabilities 
$$p(h,t)= {(\theta-h-b)(t+a)\over \theta(t+h+a+b)},~~~
q(h,t)={\theta+t+a)(h+b)\over \theta(t+h+a+b)}.$$
Similarly, the measures $Q_{\infty,m}$ are constructed.

\begin{proposition}
The measures $Q_{m,\infty}, Q_{\infty,m}$ with $m\geq 0$, and $P^*$
comprise the boundary of the generalized Eulerian triangle.
\end{proposition}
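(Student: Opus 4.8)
The plan is to determine the full set ${\rm ext}^\circ\calP$ of measures induced by regular paths and then to verify that each of them is genuinely extreme, using the characterization of ${\rm ext}\calP$ via regular paths together with Proposition \ref{hom} and Lemma \ref{L2}. The first thing I would record is the probability function of the fully supported $P^*$. Since the balanced weight has $\sigma(n)=n+a+b$, the definitions give $\phi^*(h+1,t)/\phi^*(h,t)=\phi^*(h,t+1)/\phi^*(h,t)=1/(h+t+a+b)$, so $\phi^*$ depends only on the level $h+t$ and equals $\phi^*(h,t)=1/(a+b)_{h+t}$ (rising factorial); one checks directly that this solves (\ref{forward-rec}). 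In particular (\ref{dist-S}) yields the normalization $\sum_{h+t=n}d(h,t)=(a+b)_n$, which I will use below.

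Next I would classify the regular paths $(h_n,n-h_n)$. If $h_n\to m<\infty$ (so $n-h_n\to\infty$), the conditioned measures $Q_{m,n-m}$ converge to the finitely supported measure $Q_{m,\infty}$ on $I_m$; this is a one-dimensional Martin-kernel computation on the strip $\{h\le m\}$, and $Q_{m,\infty}$ exists for every $m\ge 0$ by the weight transformation preceding the statement. Symmetrically $n-h_n\to m$ gives $Q_{\infty,m}$. Once the remaining case below is settled, these three possibilities exhaust the regular paths, because any path with a coordinate that fails to converge in $\{0,1,2,\dots\}\cup\{\infty\}$ has, along suitable subsequences, elementary measures approaching two of the distinct limits $Q_{m,\infty}$, $Q_{\infty,m}$, $P^*$, contradicting regularity.

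The crux is that every regular path with $h_n\to\infty$ and $n-h_n\to\infty$ induces $P^*$, i.e. that the Martin kernel in (\ref{dim-rat}) satisfies $d(h,t;h',t')/d(h',t')\to\phi^*(h,t)=1/(a+b)_{h+t}$ as $\min(h',t')\to\infty$. Writing $n=h+t$, $n'=h'+t'$, the Chapman--Kolmogorov identity $d(h',t')=\sum_{i+j=n}d(i,j)\,d(i,j;h',t')$ together with $\sum_{i+j=n}d(i,j)=(a+b)_n$ reduces this to the ratio-limit statement that the extended dimensions at a common level become asymptotically proportional, $d(i+1,j-1;h',t')/d(i,j;h',t')\to 1$ for $i+j=n$. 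Equivalently, by Bayes' rule under $P^*$, the forward transition probability $P^*(S_{n'}=(h',t')\mid S_n=(i,j))$ becomes asymptotically independent of the level-$n$ state $(i,j)$. I would prove this by coupling two copies of the $P^*$-chain started at time $n$ from the neighbouring states $(i,j)$ and $(i+1,j-1)$ and showing that they coalesce before time $n'$ with probability tending to $1$. The essential input is the balancing (negative reinforcement) of $P^*$: since $p^*(h,t)=\frac{t+a}{h+t+a+b}$ is decreasing in $h$, the copy that is ahead in heads is discouraged from adding more, so the balancing drift pushes the two copies together; this is exactly what forces the limit to be independent of the direction of escape to infinity. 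Controlling this coalescence uniformly over the escape direction is the main obstacle; an explicit asymptotic analysis of the generalized Eulerian numbers $d(h,t;h',t')$ is a possible but more computational alternative.

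Finally I would assemble the boundary. The preceding steps give ${\rm ext}^\circ\calP=\{Q_{m,\infty}:m\ge0\}\cup\{Q_{\infty,m}:m\ge0\}\cup\{P^*\}$. The finitely supported measures are automatically extreme, as noted after the dichotomy for finitely supported members of ${\rm ext}\calP$. For $P^*$, the balancing drift gives $H_n\to\infty$ and $T_n\to\infty$ almost surely (indeed $H_n/n\to 1/2$), so $P^*$-almost every path falls into the last case and induces $P^*$; hence the regular paths inducing $P^*$ carry full $P^*$-measure and $P^*$ is extreme. Thus the listed measures exhaust ${\rm ext}\calP$. As a consistency check, $P^*$ is strictly stochastically larger than every $Q_{m,\infty}$ and strictly smaller than every $Q_{\infty,m}$, so by Lemma \ref{L2} one has $\pi(Q_{m,\infty})\nearrow a/(a+b)=\pi(P^*)\swarrow\pi(Q_{\infty,m})$, and $\mathcal E$ is closed with $P^*$ occupying the unique accumulation point, in agreement with Proposition \ref{hom}.
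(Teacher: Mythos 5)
Your architecture is right and every measure you list is indeed on the boundary, but the step you yourself flag as ``the main obstacle'' is a genuine gap, and it is exactly the step on which everything else in your argument rests. You need that every regular path with $h_n\to\infty$ and $n-h_n\to\infty$ induces $P^*$; both your exhaustion of ${\rm ext}^\circ\calP$ and your ergodicity argument for $P^*$ (via the full-measure set of regular paths inducing it) presuppose this. The backward/forward coalescent coupling you sketch does give total-variation merging of $P^*(S_{n'}\in\cdot\mid S_n=(i,j))$ and $P^*(S_{n'}\in\cdot\mid S_n=(i+1,j-1))$, since the discrepancy shrinks at rate $1/(n'+a+b)$ and the harmonic series diverges; but what the Martin kernel requires is the \emph{pointwise ratio} limit $d(i,j;h',t')/d(i+1,j-1;h',t')\to 1$ at the particular terminal state $(h',t')$ of the path, uniformly over how $(h',t')$ escapes to infinity. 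Total-variation merging does not yield a ratio limit at individual states (the discrepancy could concentrate exactly where the path goes), so as written the crux is not established, and you correctly note that the alternative is a nontrivial asymptotic analysis of generalized Eulerian numbers.

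The paper avoids this entirely by the monotonicity machinery already in place. If a regular path has $h_n\to\infty$ and $n-h_n\to\infty$, then for every fixed $m$ it eventually lies above the path $(m,n-m)$ and below the path $(n-m,m)$, so by Lemma \ref{L2} the induced measure $P$ satisfies $\pi(Q_{m,\infty})\le\pi(P)\le\pi(Q_{\infty,m})$. Since the transition probabilities of $Q_{m,\infty}$ and $Q_{\infty,m}$ converge to those of $P^*$ as $m\to\infty$, both bounds tend to $\pi(P^*)=a/(a+b)$, forcing $\pi(P)=\pi(P^*)$; injectivity of $\pi$ on the compact set ${\rm ext}^\circ\calP$ (Proposition \ref{hom}) then gives $P=P^*$ with no path-counting or coalescence estimates at all. (The paper phrases this as ``a version of Lemma \ref{discr} and compactness.'') The remaining ingredients of your write-up --- the formula $\phi^*(h,t)=1/(a+b)_{h+t}$, the identification of paths with a bounded coordinate, and the ergodicity of $P^*$ from $H_n\to\infty$, $T_n\to\infty$ a.s. --- are correct and essentially match the paper; I would simply replace your crux step by the squeeze argument above.
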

\proof 
As $m\to\infty$, the finitely supported measures $Q_{m,\infty}, Q_{\infty,m}$ converge to $P^*$.
By a version of Proposition \ref{discr} and compactness, these measures comprise the sequential boundary
${\rm ext}^\circ\calP$. It remains to be shown that $P^*$ is ergodic, but this 
follows, because otherwise $P^*$  were decompasable in mixture of finitely supported
 elements of ${\rm ext}^\circ\calP$,
which is impossible because  $H_n\to\infty$ and $T_n\to\infty$ $P^*$-a.s.
 \endpf

The process $S$ under $P^*$ is known as the following Friedman's urn model (see \cite{Flaj}, section 2.2).
At time $h+t$ there are $h+t+a+b$ balls in an urn, of which $h+a$ are marked `heads' and $t+a$ `tails'.
A ball is drawn uniformly at random, and returned in the urn together with another ball of the opposite label.
If $a$ or $b$ are not integers, this prescription is to be understood as adding $1$
to the total weight of `heads' with probability
$(t+a)/(h+t+a+b)$. Unlike P{\'o}lya's urn model or the Stirling process in section \ref{alphaL1},
Friedman's urn exhibits concentration of measure in the  form $P^*(H_n/n\to 1/2)=1$.
The latter fact, combined with the criterion of 
ergodicity and the observation that $Q_{h_n,t_n}$ converge weakly to $P^*$ whichever
$h_n\to\infty,t_n\to\infty$,  confirms that $P^*$ is a unique fully supported ergodic measure.

See \cite{Varchenko, adic1, adic2} for variations on the theme 
and other proofs of ergodicity of $P^*$ for the standard Eulerian 
triangle with $a=b=1$.

\def\cprime{$'$} \def\polhk#1{\setbox0=\hbox{#1}{\ooalign{\hidewidth
\lower1.5ex\hbox{`}\hidewidth\crcr\unhbox0}}} \def\cprime{$'$}
\def\cprime{$'$} \def\cprime{$'$}
\def\polhk#1{\setbox0=\hbox{#1}{\ooalign{\hidewidth
\lower1.5ex\hbox{`}\hidewidth\crcr\unhbox0}}} \def\cprime{$'$}
\def\cprime{$'$} \def\polhk#1{\setbox0=\hbox{#1}{\ooalign{\hidewidth
\lower1.5ex\hbox{`}\hidewidth\crcr\unhbox0}}} \def\cprime{$'$}
\def\cprime{$'$} \def\cydot{\leavevmode\raise.4ex\hbox{.}} \def\cprime{$'$}
\def\cprime{$'$} \def\cprime{$'$} \def\cprime{$'$}


\begin{thebibliography}{100}


\bibitem{aldous} D. Aldous and J. Pitman,
On the zero-one law for exchangeable events,
{\it Ann. Prob.} 704--723, 1979.

\bibitem{ABT} R. Arratia, A. Barbour and S. Tavar{\'e},
{\it Logarithmic Combinatorial Structures: A Probabilistic Approach},
European Math. Soc., 2003.


\bibitem{Baker} 
B.M. Baker and D.E. Handelman, 
Positive polynomials and time-dependent integer-valued random variables,
{\it Canadian J. Math.} 44:3-41, 1992.


\bibitem{Flaj} P. Flajolet, P. Dumas and V. Puyhaubert,
Some exactly solvable models in urn process theory,
{\it DMTCS proc. AG} 58--118, 2006.


\bibitem{Malyshev} G. Fayolle, R. Iasnogorodski and V. Malyshev,
{\it Random walks in the quarter-plane}, Springer 1999. 

\bibitem{adic1} S.B. Frick , M. Keane, K. Petersen and I.A. Salama,
Ergodicity of the adic transformation on the Euler graph,
{\it Math. Proc. Camb. Phil. Soc.} 141:231--238, 2006.


\bibitem{adic2} S.B. Frick and K. Petersen,
Random permutations and unique fully supported ergodicity for the Euler adic transformation,
{\it Ann. IHP - Prob. Stat.} 2008.

\bibitem{perm} A. Gnedin, Coherent random permutations with record statistics, 
{\it DMTCS proc. AH}, 147–-158, 2007.

\bibitem{GO0} A. Gnedin and G. Olshanski, Coherent random permutations with descent statistic and the boundary problem
for the graph of zigzag diagrams, {\it Intern. Math. Res. Notes} Article ID 51968, 2006.

\bibitem{GO1} A. Gnedin and G. Olshanski, The boundary of the Eulerian number triangle,
{\it Moscow Math. J.} 6: 461--475, 2006.

\bibitem{GO2} A. Gnedin and G. Olshanski, A $q$-analogue of de Finetti's theorem,
{\it Elec. J. Combinatorics}, paper R78, 2009.

\bibitem{GO3} A. Gnedin and G. Olshanski, $q$-Exchangeability via quasi-invariance,
{\tt http://arxiv.org/abs/0907.3275} 2009.

\bibitem{Gibbs} A. Gnedin and J. Pitman, Gibbs exchangeable partitions and Stirling triangles,
{\it J. Math. Sci.} 138:5674--5685, 2006.

\bibitem{Haus} F. Hausdorff, Summationsmethoden und Momentfolgen. I,
 {\it Math. Z.} 	9:74-109, 1921.

\bibitem{Irina} I. Ignatiouk-Robert and  C. Loree,
Martin boundary of a killed random walk on a quadrant, {\tt http://arxiv.org/abs/0903.0070}, 2009.

\bibitem{Kerov}
S. Kerov.
\newblock Combinatorial examples in the theory of {AF}-algebras.
\newblock {\em Zap. Nauchn. Sem. Leningrad. Otdel. Mat. Inst. Steklov. (LOMI)},
172 (Differentsialnaya Geom. Gruppy Li i Mekh. Vol. 10):55--67, 1989. Translated in 
{\it J. Soviet Math.} 59(5): 1063-1071, 1992. 

\bibitem{KOO} S. Kerov, A. Okounkov and G. Olshanski,
The boundary of Young graph with Jack edge multiplicities, {\it Intern. Math. Res. Notices} 
1998: 173-199, 1998.

\bibitem{Varchenko} K. Petersen and A. Varchenko,
The Euler adic dynamical system and path counts in the Euler graph, 
{\tt http://arxiv.org/abs/0811.1733}, 2009.




\bibitem{Mineka} Mineka, A criterion for tail events for sums of independent random variables,
{\it Prob. Th. Rel. Fields} {\bf 25}: 163-170, 1973.


\bibitem{CSP} J. Pitman,{\it  Combinatorial stochastic processes},
Springer L. Notes Math., vol 1875, 2006.

\bibitem{Pitman-DeFin} J. Pitman, An extension of de Finetti's theorem, {\it Adv. Appl. Probab.} 10:268--270, 1978.

\end{thebibliography}
\end{document}